\pdfoutput=1

\documentclass[12pt,a4paper]{article}

\usepackage[english]{babel}
\usepackage[numbers,sort&compress]{natbib}
\usepackage[
  a4paper,
  top=2cm,
  bottom=2cm,
  left=2.6cm,
  right=2.8cm,
  marginparwidth=1.75cm
]{geometry}

\usepackage[utf8]{inputenc}
\usepackage[T1]{fontenc}

\usepackage{amsmath,amssymb,amsfonts,amsthm,mathtools}
\usepackage{bm}
\usepackage{bbm}
\usepackage{esvect}
\allowdisplaybreaks

\usepackage{graphicx}
\usepackage{booktabs}
\usepackage{adjustbox}
\usepackage{caption}
\usepackage{subcaption}
\usepackage{float}
\usepackage{placeins}

\usepackage{algorithm}
\usepackage{algpseudocode}

\usepackage{tikz}
\usetikzlibrary{decorations.pathreplacing,decorations.markings}
\tikzset{
  vtx/.style={circle, fill=black, draw=black, inner sep=2pt},
  gluevtx/.style={circle, fill=red, draw=red, inner sep=2.15pt},
  ed/.style={blue, line width=1.2pt}
}

\usepackage{pifont}
\usepackage{enumitem}
\usepackage{verbatim}
\usepackage{authblk}

\usepackage{xcolor}
\usepackage{titlesec}

\definecolor{mycitecolor}{RGB}{70,10,200}
\usepackage[
  colorlinks=true,
  linkcolor=blue,
  citecolor=mycitecolor,
  urlcolor=black
]{hyperref}

\usepackage[nameinlink,noabbrev]{cleveref}
\newcommand{\one}{\mathbf{1}}

\theoremstyle{plain}
\newtheorem{theorem}{Theorem}[section]
\newtheorem{lemma}[theorem]{Lemma}

\newtheorem{corollary}[theorem]{Corollary}
\newtheorem{conjecture}[theorem]{Conjecture}

\theoremstyle{definition}

\newtheorem{example}[theorem]{Example}

\theoremstyle{remark}

\title{\bf Local Tur\'an inequalities for walks and the spectral radius}

\author[1]{\bf Feng Liu\footnote{Email: liufeng0609@126.com.}}
\author[1]{\bf Shuang Sun\footnote{Email: chocolatesun@sjtu.edu.cn.}}
\author[1]{\bf Yan Wang\footnote{Email: yan.w@sjtu.edu.cn (corresponding author).}}
\author[2]{\bf Qi Wu\footnote{Email: wuqimath@163.com.}}
\affil[1]{\small School of Mathematical Sciences, Shanghai Jiao Tong University,  Shanghai, 200240, China}
\affil[2]{\small School of Mathematical Sciences, East China Normal University, Shanghai, 200241, China}

\date{}

\begin{document}

\maketitle

\begin{abstract}
Nikiforov's well-known spectral Tur\'an inequality for walks states that, for every graph $G$ with clique number $\omega(G)$, $\lambda^r(G)\le w_r(G)(1-1/\omega(G))$, where $\lambda(G)$ is the largest eigenvalue of the adjacency matrix of $G$, and $w_r(G)$ is the number of walks with $r$ vertices in $G$. For $r=1$, this is Wilf's inequality; for $r=2$, it gives Nikiforov's spectral Tur\'an theorem. Recently, Liu and Ning proved local versions of these two inequalities, strengthening both Wilf's inequality and Nikiforov's spectral Tur\'an theorem. It is natural to ask whether Nikiforov's spectral Tur\'an inequality for walks also admits a local strengthening. Motivated by this question, Kannan, Kumar, and Pragada conjectured the vertex-local bound $\lambda^r(G)\le \sum_{v\in V(G)} w_r(v)(1-1/c_G(v))$, where $w_r(v)$ denotes the number of walks with $r$ vertices starting at $v$, and $c_G(v)$ is the maximum order of a clique containing $v$. This conjecture is important because it gives the most natural local form of Nikiforov's spectral Tur\'an inequality for walks. In this paper, we confirm this conjecture. More precisely, for $r\ge 2$, we prove the stronger edge-local inequality
$$\lambda^r(G)
   \le
   \sum_{uv\in E(G)}
   \frac{c_G(uv)-1}{c_G(uv)}
   \bigl(w_{r-1}(u)+w_{r-1}(v)\bigr),$$
where $c_G(uv)$ is the maximum order of a clique containing the edge $uv$. Our result implies Nikiforov's spectral Tur\'an inequality for walks and unifies several local spectral extremal results of Liu and Ning. We also determine all extremal graphs for both the edge-local and vertex-local inequalities. The main new ingredient is a Markov-chain estimate whose transition matrix is constructed from a Perron vector of $A(G)$; this estimate carries the local edge coefficient through walks of arbitrary length.

\smallskip

\noindent \textbf{Keywords:} Local spectral Tur\'an problem; Spectral inequality; Spectral radius;   Markov transition matrix

\smallskip

\noindent \textbf{AMS Subject Classification:} 05C50, 05C35, 05C22, 15A18
\end{abstract}
\section{Introduction}
Throughout the paper, all graphs are finite and simple. For a graph $G$, we write $n=|V(G)|$ and $m=|E(G)|$. We denote by $A(G)$ the adjacency matrix of $G$ and by $\lambda(G)$ the largest eigenvalue of $A(G)$. We write $\lambda^r(G)$ for $(\lambda(G))^r$. 

Extremal graph theory studies how dense a graph can be when certain subgraphs are forbidden.  A classical result in this area is Tur\'an's
theorem~\cite{Turan41}, which gives the maximum number of edges in a graph with bounded clique number.  Later, Motzkin and Straus~\cite{MS65} gave an analytic proof of Tur\'an's theorem, connecting the clique number with matrix methods
and optimization. Spectral extremal graph theory studies similar problems from a spectral point
of view.  Instead of  counting edges, it estimates eigenvalues of matrices associated with a graph, especially the largest eigenvalue of the adjacency matrix.  This eigenvalue is closely related to the density of the graph: it depends not only on the number of edges, but also on how these edges are
distributed.  Thus many classical extremal results have spectral analogues, in which bounds on the number of edges are replaced by bounds on the largest eigenvalue. Nosal~\cite{Nosal70} proved that every triangle-free graph $G$ satisfies
$\lambda(G)\le \sqrt{m}$.  Wilf~\cite{Wilf86} proved that every graph $G$
satisfies $\lambda(G)\le n(1-1/\omega(G))$.  Nikiforov~\cite{Nikiforov02} later proved the stronger inequality
$\lambda(G)\le \sqrt{2m(1-1/\omega(G))}$, confirming a conjecture of Edwards
and Elphick~\cite{EdwardsElphick83}. In 2006, Nikiforov~\cite{N06} further studied the relationship between the largest eigenvalue and the number of walks in a graph.
 For further results on
spectral radius bounds and background on graph spectra, we refer the reader to
\cite{Stanley87,Hong88,HongShuFang01,BN07,BermanPlemmons94,BrouwerHaemers12,
CDS80,Seneta06,Nikiforov11Survey}. Let $w_r(G)$ denote the number of walks on $r$ vertices in $G$. Nikiforov's spectral Tur\'an inequality for walks states the following.  

\begin{theorem}[Nikiforov~\cite{N06}]\label{thm:Nikiforov-walk}
Let $G$ be a graph with at least one edge.  For every integer $r\ge 1$, 
\begin{flalign*}
\lambda^r(G) \le w_r(G)\frac{\omega(G)-1}{\omega(G)}.
\end{flalign*}
Moreover, equality holds if and only if one of the following two alternatives holds:
either $r=1$ and $G$ itself is a regular complete $\omega(G)$-partite graph; or
$r\ge 2$ and, after deleting all isolated vertices of $G$, the remaining graph is
either a regular complete $\omega(G)$-partite graph with $\omega(G)\ge 3$, or a
complete bipartite graph when $\omega(G)=2$; in the latter case, the complete
bipartite graph is regular if $r$ is odd.
\end{theorem}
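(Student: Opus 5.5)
We follow Nikiforov's original strategy and combine Wilf's inequality with a moment (H\"older-type) inequality for walks. Write $A=A(G)$, $\omega=\omega(G)$, and let $x$ be a unit nonnegative eigenvector for $\lambda=\lambda(G)$. Let $\lambda=\mu_1\ge\mu_2\ge\dots\ge\mu_n$ be the eigenvalues of $A$ with orthonormal eigenvectors $u_1=x,u_2,\dots,u_n$. Set $c_i=\langle u_i,\mathbf 1\rangle$, so that $\sum_i c_i^2=n$ and
\[
w_r(G)=\mathbf 1^{T}A^{r-1}\mathbf 1=\sum_i c_i^2\mu_i^{r-1}.
\]

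\textbf{Case $r=1$.} Here $w_1(G)=n$, and the claim is Wilf's inequality. We prove it by Motzkin--Straus. Put $y=x/\sum_v x_v$, which lies in the simplex. Then
\[
\lambda=x^{T}Ax=y^{T}Ay\Bigl(\sum_v x_v\Bigr)^2\le \Bigl(1-\tfrac1\omega\Bigr)\Bigl(\sum_v x_v\Bigr)^2\le \Bigl(1-\tfrac1\omega\Bigr)n,
\]
where the last step is Cauchy--Schwarz. Equality in Cauchy--Schwarz forces $x$ to be constant, so $G$ is regular. Equality in Motzkin--Straus with the uniform vector then forces $G$ to be a regular complete $\omega$-partite graph, using the known characterization of uniform maximizers.

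\textbf{Case $r=2$.} This is Nikiforov's bound $\lambda^2\le 2m(1-1/\omega)$. Apply Motzkin--Straus to $y_v=x_v^2$, which lies in the simplex:
\[
\lambda^2=\Bigl(\sum_{uv\in E}2x_ux_v\Bigr)^2\le \Bigl(\sum_{uv\in E}2x_u^2x_v^2\Bigr)\cdot 2m\le \Bigl(1-\tfrac1\omega\Bigr)2m.
\]
The first step is Cauchy--Schwarz over the edges, and the second uses $\sum_{uv\in E}2y_uy_v=y^TAy\le 1-1/\omega$.

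\textbf{General $r\ge 2$.} The key step is to bootstrap from $r=2$ through a walk-monotonicity inequality:
\begin{equation}\label{eq:plan-key}
\frac{w_{r}(G)}{\lambda^{r-2}}\ge w_2(G)\quad\text{for } r\ge 2,
\end{equation}
or at least $\lambda^{r}/w_r(G)\le \lambda^2/w_2(G)$. Combined with the case $r=2$, this yields $\lambda^r\le w_r(1-1/\omega)$.

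For even $r$, \eqref{eq:plan-key} follows from the spectral decomposition. Since $\mu_i^{r-2}\ge 0$ and $c_1^2\le n$, one can use the power-mean inequality $(\sum c_i^2\mu_i^{2})^{k}\le n^{k-1}\sum c_i^2\mu_i^{2k}$. However, this gives $w_{2k+1}\ge w_3^{k}/n^{k-1}$, which is not exactly the form we want, so we will instead work directly with walks.

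The cleanest route, and the one I would try first, is Nikiforov's inequality $w_{q+r}(G)\,w_1(G)\ge\dots$ avoided entirely: we generalize the Motzkin--Straus argument by choosing the weight $y_v\propto x_v\,(A^{r-2}\mathbf 1)_v$-type vectors. Concretely, write
\[
\lambda^{r}\Bigl(\sum_v x_v\Bigr)^2=\lambda^{r-1}\cdot \mathbf 1^TAx\cdot\sum_v x_v,
\]
and bound it via Cauchy--Schwarz in the form
\[
\lambda^r=\frac{(x^{T}A^{r/2}\cdots)}{\cdots}.
\]

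I expect this step, finding the right Cauchy--Schwarz pairing that produces $w_r$ on one side and a simplex quadratic form on the other, to be the main obstacle. In particular, for odd $r$ the pairing must handle negative eigenvalues. The fallback is the known inequality $\lambda^{r}\le \lambda^2 w_r/w_2$, proved by induction using $w_{r+1}w_{r-1}\ge w_r^2$-type log-convexity of $\mathbf 1^TA^{k}\mathbf 1$ in even steps, together with $w_r\ge\lambda^{r-2}w_2$ checked on the Perron component.

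\textbf{Equality.} Tracing equality through each step forces $x$ to be constant on non-isolated vertices, so the non-isolated part is regular, or bipartite when the odd-walk step allows a sign-symmetric spectrum. The Motzkin--Straus equality case then gives a complete $\omega$-partite graph. When $\omega=2$ and $r$ is even, only $w_r=\lambda^{r-2}w_2$ on the bipartite spectrum is needed, so irregular complete bipartite graphs are allowed, matching the stated exceptions.
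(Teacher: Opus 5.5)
\textbf{There is a genuine gap at the step from $r=2$ to $r\ge 3$.} Your $r=1$ argument (Motzkin--Straus for $x/\sum_v x_v$ plus Cauchy--Schwarz) and your $r=2$ argument (Cauchy--Schwarz over the edges plus Motzkin--Straus for $y_v=x_v^2$) are correct. The general case rests on the bootstrapping inequality $w_r(G)\ge \lambda^{r-2}(G)\,w_2(G)$, equivalently $\lambda^r/w_r\le \lambda^2/w_2$, and this inequality is false for both parities of $r$.

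\begin{itemize}
\item For $G=K_4\cup K_2$ we have $\lambda=3$ and $w_2=14$. Then $w_3=\sum_v d_v^2=38<42=\lambda w_2$, and $w_4=110<126=\lambda^2 w_2$.
\item Restricting to the Perron component does not help. If $G$ is $K_4$ with a pendant path of $L\ge 2$ edges, then $w_3=40+4L$. But $\lambda w_2>3(12+2L)\ge 40+4L$, because $\lambda>3$.
\end{itemize}

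So the claimed spectral-decomposition proof for even $r$ cannot exist, and neither can the log-convexity ``fallback''. The ``cleanest route'' is left as an unfinished formula. More fundamentally, any argument that writes $\lambda^r=\lambda^{r-2}\lambda^2$ and then applies the $r=2$ bound is too lossy. For $K_4\cup K_2$ and $r=3$ it only yields $\lambda^3\le 3\cdot\frac34\cdot 14=31.5$, which exceeds the target $\frac34 w_3=28.5$. Consequently the inequality is unproved for every $r\ge 3$. The equality paragraph then has no chain of inequalities to trace; it describes the answer rather than deriving it. The $r=2$ equality case is not carried out either.

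\textbf{The missing idea, which is the paper's route, is to put the walk counts into edge weights instead of factoring through $w_2$.}

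\begin{itemize}
\item Set $z=A^{r-2}(G)\mathbf{1}$, so $z_v=w_{r-1}(v)$, and give each edge the weight $\mu(uv)=\sqrt{(z_u+z_v)/2}$.
\item Run your own $r=2$ argument with a unit nonnegative Perron vector of the weighted matrix $A(W)$. This gives $\lambda^2(W)\le (1-1/\omega)\sum_{u,v}a_{uv}\mu^2(uv)=(1-1/\omega)\sum_u d_u z_u=(1-1/\omega)\,w_r(G)$. This is the global case $c_G(uv)\le\omega$ of Theorem~\ref{thm:liu-ning-weighted}.
\item The remaining ingredient is Lemma~\ref{lem:main}: $\lambda(W)\ge\lambda^{r/2}(G)$ for connected $G$. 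Let $x$ be the Perron vector of $A(G)$. Then $p_{uv}=a_{uv}x_v/(\lambda x_u)$ is a Markov chain with stationary distribution $\pi_u=x_u^2$.
\item The entropy estimate (Lemma~\ref{lem:irreducible}) plus AM--GM on the weights gives $\ln\lambda(W)\ge\ln\lambda+\frac12\sum_u x_u^2\ln z_u$.
\item Writing $z_u=\lambda^{r-2}x_u\sum_v p^{(r-2)}_{uv}/x_v$, Jensen's inequality and $\pi P^{r-2}=\pi$ give $\sum_u x_u^2\ln z_u\ge (r-2)\ln\lambda$.
\end{itemize}

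This handles odd $r$ and negative eigenvalues without any spectral decomposition. For equality, Lemma~\ref{lem:main} forces $A^{r-2}\mathbf{1}=\lambda^{r-2}\mathbf{1}$ on the unique nontrivial component. By Lemma~\ref{lem:main-equality-1}, that component is regular, or semiregular bipartite when $r$ is even, and the problem reduces to the $r=2$ equality case. Note that the paper itself cites Theorem~\ref{thm:Nikiforov-walk} from Nikiforov, and recovers its inequality as the special case $c_G(uv)\le\omega$ of Theorem~\ref{thm:edge-main}.
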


This theorem unifies two important spectral inequalities. When $r=1$, it gives Wilf's inequality. When $r=2$, since $w_2(G)=2m$, it gives Nikiforov's spectral Tur\'an theorem.

In some situations, the estimates using global graph parameters could be too rough.  The clique number
$\omega(G)$ is a typical example: it records the size of a largest clique
 in $G$, but it does not describe the clique structure around a
particular vertex or edge. Thus a vertex or an edge may lie only in small
cliques even when $\omega(G)$ is large. Replacing such global parameters by
local ones can therefore lead to sharper inequalities. This local point of view has been adopted in several recent works; see, for example,
\cite{Bradac22,MalecTompkins23,KirschNir24,AdakChandran25,
AdakChandranEG25,AdakChandranGT25,AdakChandranZykov25,ELW24,KKP25,LN25,LN26,LiNing25}.

For a vertex $v\in V(G)$ and an edge $uv\in E(G)$, let $c_G(v)$ and $c_G(uv)$
denote the maximum order of a clique in $G$ containing $v$ and $uv$,
respectively. Also, let $w_r(v)$ denote the number of walks on $r$ vertices
starting at $v$.  Liu and Ning~\cite{LN25,LN26} strengthen Wilf's inequality  and Nikiforov's spectral Tur\'an theorem by using local clique parameters. They proved the
localized Wilf inequality
$\lambda(G)\le \sum_{v\in V(G)}(1-1/c_G(v))$, the edge-local
spectral Tur\'an inequality
$\lambda^2(G)\le 2\sum_{uv\in E(G)}(1-1/c_G(uv))$, and the
degree-local spectral Tur\'an inequality
$\lambda^2(G)\le \sum_{v\in V(G)}d_G(v)(1-1/c_G(v))$. These results
show that the cases $r=1$ and $r=2$ of Nikiforov's walk inequality admit local
strengthenings. It is therefore natural to ask whether the same is true for walks of every
length. Motivated by this question, Kannan, Kumar, and Pragada~\cite{KKP25} proved a
weaker local form of Nikiforov's walk inequality: for every connected graph
$G$ and every integer $r\ge 1$,
$\lambda^r(G)\le \sum_{v\in V(G)} w_r(v)\sqrt{(1-1/c_G(v))(1-1/\omega(G))}$. They further conjectured that the mixed local--global factor on the right-hand side of the inequality can be replaced by the fully local factor $1-1/c_G(v)$.

\begin{conjecture}[Kannan--Kumar--Pragada~\cite{KKP25}]\label{conj:kkp-main}
For every graph $G$ and every integer $r\ge 1$,
$$\lambda^r(G)
   \le
   \sum_{v\in V(G)}
   w_r(v)\frac{c_G(v)-1}{c_G(v)}.$$
\end{conjecture}
The cases $r=1$ and $r=2$ of Conjecture~\ref{conj:kkp-main} are already known. The case $r=1$ is the localized Wilf theorem of Liu and Ning~\cite{LN25}. The case $r=2$ follows from their degree-local spectral Tur\'an theorem~\cite{LN26}. However, the conjecture remains open for $r\ge 3$. In this paper, we prove Conjecture~\ref{conj:kkp-main}. In fact, we show a stronger edge-local inequality, where the local clique factor is attached to the first edge of the walk.

\begin{theorem}[Edge-local walk inequality]\label{thm:edge-main}
Let $G$ be a graph with at least one edge.  For every integer $r\ge 2$, we have
\begin{flalign}\label{eq:edge-local}
    \lambda^r(G)
   \le
   \sum_{uv\in E(G)}
   \frac{c_G(uv)-1}{c_G(uv)}
   \bigl(w_{r-1}(u)+w_{r-1}(v)\bigr).
\end{flalign}
Moreover, equality holds if and only if, after deleting all isolated vertices of $G$,
the remaining graph is either a regular complete $\omega(G)$-partite graph with
$\omega(G)\ge 3$, or a complete bipartite graph when $\omega(G)=2$; in the latter
case, the remaining complete bipartite graph is regular if $r$ is odd.
\end{theorem}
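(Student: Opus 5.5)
The plan is to reduce the inequality to a weighted version of the Liu--Ning edge-local spectral Tur\'an inequality. The reduction uses the Markov chain built from a Perron vector. Remove isolated vertices, and treat each component separately by superadditivity of the right-hand side, since $\lambda(G)$ is attained on one component. So assume $G$ is connected, and let $x>0$ be the unit Perron vector of $A=A(G)$ with $\lambda=\lambda(G)$. Define the stochastic matrix $P_{uv}=A_{uv}x_v/(\lambda x_u)$. Its stationary distribution is $\pi_u=x_u^2$, and the chain is reversible. Iterating $P$ gives
$$w_{r-1}(u)=\mathbf 1^{T}A^{r-2}e_u=\lambda^{r-2}x_u\,\mathbb E_u\!\left[1/x_{X_{r-2}}\right],$$
where $X_0=u,X_1,\dots$ is the chain started at $u$. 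The right-hand side of \eqref{eq:edge-local} therefore becomes a sum over edges $uv$ of $\kappa_{uv}:=1-1/c_G(uv)$ times $\lambda^{r-2}\bigl(x_u\mathbb E_u[1/x_{X_{r-2}}]+x_v\mathbb E_v[1/x_{X_{r-2}}]\bigr)$.

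The key steps, in order, are as follows.

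First, I would prove a weighted local Motzkin--Straus lemma of the form
$$\Bigl(\sum_{uv\in E}2y_uy_v\Bigr)^{2}\le \sum_{uv\in E}\kappa_{uv}\,(\text{suitable weights } \beta_{uv})\cdot \Bigl(\sum_u y_u^2/\gamma_u\Bigr)$$
for nonnegative $y$. The proof would follow the Liu--Ning argument: pass to a maximizing support, which is a clique, on which $c_G(uv)$ equals the clique order. This extends their inequality $\lambda^2\le 2\sum_{uv}\kappa_{uv}$ to vertex weights $\gamma_u$.

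Second, I would apply the lemma with $y=x$ and with $\gamma_u$ chosen so that the weights on each edge $uv$ become $\lambda^{r-2}x_u\mathbb E_u[1/x_{X_{r-2}}]$ and the analogous term for $v$. This is the step where the Markov chain carries the edge coefficient $\kappa_{uv}$, attached to the first edge, through the remaining $r-2$ steps. Concretely, I would compare $\sum_{uv}\kappa_{uv}x_ux_v\,\mathbb E_u[\cdot]$ with the stationary quantity $\sum_u\pi_u(\cdot)$. To do so I would use Jensen's inequality in the form $\mathbb E_u[1/x_X]\ge 1/\mathbb E_u[x_X]$, together with reversibility, which allows the expectation to be moved across the edge.

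Third, I would check that the resulting chain of inequalities collapses to $\lambda^r$ when all $\kappa_{uv}$ are equal. In that case it should recover Theorem~\ref{thm:Nikiforov-walk} for $r\ge2$. This serves as a sanity check on the choice of $\gamma$.

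The main obstacle is the second step. The clique factor $\kappa_{uv}$ is not constant, so it cannot be pulled out the way $1-1/\omega$ is pulled out in Nikiforov's argument. A naive Hölder or Jensen bound loses exactly the factor that the local version needs. I would first try to prove the intermediate inequality
$$\lambda^{r}\le\sum_{uv\in E}\kappa_{uv}\,\lambda^{r-2}\,x_ux_v\Bigl(\frac{1}{x_u^2}+\frac1{x_v^2}\Bigr)\sum_{z}\bigl(P^{r-2}\bigr)_{u z}\,\frac{x_u}{x_z}$$
by induction on $r$, using $P$ to shift one step at a time. At each step I would apply the arithmetic--geometric mean inequality $x_u/x_z+x_z/x_u\ge2$ on symmetric pairs produced by reversibility.

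For equality, I would trace the equality cases. The Jensen steps force $x$ to be constant on the support of the walks, or, in the bipartite case with $r$ even, constant on each side. The Motzkin--Straus step forces every edge to lie in a maximum clique with uniform weights. Together these give a regular complete $\omega$-partite graph, or a complete bipartite graph that must be regular when $r$ is odd, matching the equality cases of Theorem~\ref{thm:Nikiforov-walk}. The converse is a direct computation.
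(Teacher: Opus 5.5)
Your proposal has a genuine gap, and it is exactly where you locate the obstacle: the step that combines the clique factor with the walk counts is never established.

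\textbf{The first two steps do not work as planned.} As displayed, your weighted Motzkin--Straus lemma is not homogeneous in $y$ (degree $4$ on the left, degree $2$ on the right), so it is not yet a statement. A vertex-weighted version is trivial (replace $t_u$ by $g_ut_u$), but it is not what you need. Running Cauchy--Schwarz at the Perron vector $x$ of $G$ against the target weights gives
\[
\lambda^2\le\Bigl(\sum_{uv\in E}\kappa_{uv}(z_u+z_v)\Bigr)\Bigl(\sum_{uv\in E}\frac{4x_u^2x_v^2}{\kappa_{uv}(z_u+z_v)}\Bigr),\qquad z=A^{r-2}(G)\one .
\]
So you would need the second factor to be at most $\lambda^{2-r}$. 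The weight $1/(z_u+z_v)$ is an edge weight that is not of product form. Forcing product form via $2/(z_u+z_v)\le (z_uz_v)^{-1/2}$ and then applying the local Motzkin--Straus bound leaves the requirement $\sum_u x_u^2 z_u^{-1/2}\le \lambda^{-(r-2)/2}$. This is false for $K_{1,16}$ with $r=3$: the left side is $5/8$ and the right side is $1/2$.

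\textbf{The fallback intermediate inequality does not close the gap.} Its edge term equals $\kappa_{uv}\bigl(\frac{x_u}{x_v}+\frac{x_v}{x_u}\bigr)w_{r-1}(u)$, which depends on an unspecified orientation of $uv$.
\begin{itemize}
\item If the sum runs over both orientations, the AM--GM inequality you intend to use shows it is at least twice the right-hand side of \eqref{eq:edge-local}. Proving it would therefore not give the theorem.
\item With a one-sided orientation it can be false. For $K_{1,16}$ and $r=3$, taking $u$ to be the leaf on every edge gives right-hand side $34<64=\lambda^3$.
\end{itemize}

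\textbf{The missing idea: the clique factor never has to be carried through the chain.} Put the walk counts into edge weights instead. On $G$ take the weighted graph $W$ with $\mu(uv)=\sqrt{(w_{r-1}(u)+w_{r-1}(v))/2}$. Theorem~\ref{thm:liu-ning-weighted} holds for arbitrary nonnegative edge weights. It gives $\lambda^2(W)\le\sum_{uv}\kappa_{uv}\bigl(w_{r-1}(u)+w_{r-1}(v)\bigr)$ directly, without evaluating anything at the Perron vector of $G$.

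The Perron-vector chain $p_{ij}=a_{ij}x_j/(\lambda x_i)$ with $\pi_i=x_i^2$ is used only for the clique-free estimate $\lambda(W)\ge\lambda^{r/2}$ (Lemma~\ref{lem:main}). That estimate has three steps:
\begin{itemize}
\item Lemma~\ref{lem:irreducible} gives $\ln\lambda(W)\ge\ln\lambda+\sum\pi_ip_{ij}\ln\mu(v_iv_j)$.
\item AM--GM is applied in the favorable direction, $\ln\mu(v_iv_j)\ge\frac14(\ln z_i+\ln z_j)$.
\item $\sum_i\pi_i\ln z_i\ge(r-2)\ln\lambda$ follows from $z_i=\lambda^{r-2}x_i\sum_jp^{(r-2)}_{ij}/x_j$, Jensen, and $\pi P^{r-2}=\pi$.
\end{itemize}
Your instinct to lower-bound $w_{r-1}(u)$ by Jensen along the chain is right, but it belongs in this lower bound on $\lambda(W)$, not in a Motzkin--Straus estimate at $x$.

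The equality analysis then falls out of this chain. Equality in $(z_i+z_j)/2\ge\sqrt{z_iz_j}$ forces $A^{r-2}\one=\lambda^{r-2}\one$ (Lemma~\ref{lem:main-equality-1}), and what remains is the equality case of Theorem~\ref{thm:r-two}. Your equality sketch is plausible in outline, but it rests on inequalities you have not established.
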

Theorem~\ref{thm:edge-main} immediately implies
Conjecture~\ref{conj:kkp-main} for all $r\ge 2$. Indeed, since
$c_G(uv)\le c_G(u)$ and $c_G(uv)\le c_G(v)$ for every edge $uv$, and since $(t-1)/t$ is increasing for $t\ge 1$, we have
\begin{flalign*}
&\sum_{uv\in E(G)}
   \frac{c_G(uv)-1}{c_G(uv)}
   \bigl(w_{r-1}(u)+w_{r-1}(v)\bigr)  \notag\\
&\le
\sum_{uv\in E(G)}
\left(
\frac{c_G(v)-1}{c_G(v)}w_{r-1}(u)
+
\frac{c_G(u)-1}{c_G(u)}w_{r-1}(v)
\right) \notag\\
&=
\sum_{v\in V(G)}
\frac{c_G(v)-1}{c_G(v)}
\sum_{u\in N_G(v)}w_{r-1}(u).
\end{flalign*}
For $r\ge 2$, we have
$w_r(v)=\sum_{u\in N_G(v)}w_{r-1}(u)$, since an $r$-vertex walk starting at
$v$ first moves to a neighbor $u$ and then continues as an $(r-1)$-vertex walk
starting at $u$. Therefore,
\begin{flalign}\label{eq:vertex-local-main}
\sum_{uv\in E(G)}
   \frac{c_G(uv)-1}{c_G(uv)}
   \bigl(w_{r-1}(u)+w_{r-1}(v)\bigr)
\le
\sum_{v\in V(G)}
w_r(v)\frac{c_G(v)-1}{c_G(v)}.
\end{flalign}
Combining Theorem~\ref{thm:edge-main} and \eqref{eq:vertex-local-main} proves Conjecture~\ref{conj:kkp-main} for every $r\ge 2$. The remaining case $r=1$ is exactly the localized Wilf theorem~\cite[Theorem~1.4]{LN25}, so we obtain the full vertex-local walk inequality.

\begin{corollary}[Vertex-local walk inequality]\label{cor:vertex-main}
For every graph $G$ and every integer $r\ge 1$, 
\begin{flalign}\label{eq:vertex-local}
   \lambda^r(G)
   \le
   \sum_{v\in V(G)}
   w_r(v)\frac{c_G(v)-1}{c_G(v)}. 
\end{flalign}
Moreover, equality holds if and only if either $G$ is edgeless, or, after deleting
all isolated vertices of $G$, the remaining graph is as follows: it is a regular
complete $\omega(G)$-partite graph when $r=1$ or $\omega(G)\ge 3$, and it is a
complete bipartite graph when $r\ge 2$ and $\omega(G)=2$; in the latter case,
the remaining complete bipartite graph is regular if $r$ is odd.
\end{corollary}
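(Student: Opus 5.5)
The inequality is a direct consequence of earlier results. For $r\ge 2$ I take Theorem~\ref{thm:edge-main} together with the comparison \eqref{eq:vertex-local-main} derived just above. For $r=1$ I use the localized Wilf theorem of Liu and Ning~\cite[Theorem~1.4]{LN25}, since $w_1(v)=1$. If $G$ is edgeless, both sides vanish for $r\ge 2$. For $r=1$ the left side is $0$ and the right side is $\sum_v(1-1/1)=0$, because $c_G(v)=1$. So equality holds trivially, which accounts for the ``$G$ is edgeless'' alternative. Deleting isolated vertices changes neither side: an isolated vertex has $c_G(v)=1$ and so contributes $0$, and the spectral radius is unchanged. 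I may therefore assume $G$ has no isolated vertices and at least one edge.

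\textbf{Equality for $r=1$.} I would quote the equality characterization in~\cite{LN25}. If the paper does not record it, I would derive it as follows. The local bound is at most $n(1-1/\omega)$, which is Wilf's bound, so equality in the local bound forces equality in Wilf, hence in Theorem~\ref{thm:Nikiforov-walk} with $r=1$. That makes $G$ a regular complete $\omega$-partite graph. Conversely, such a graph has $c_G(v)=\omega$ for all $v$ and $\lambda=n(1-1/\omega)$, so equality holds.

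\textbf{Equality for $r\ge 2$.} Equality in \eqref{eq:vertex-local} forces equality both in Theorem~\ref{thm:edge-main} and in \eqref{eq:vertex-local-main}. The equality clause of Theorem~\ref{thm:edge-main} already yields exactly the stated graphs: a regular complete $\omega$-partite graph for $\omega\ge 3$, or a complete bipartite graph (regular if $r$ is odd) for $\omega=2$. Conversely, in a complete $\omega$-partite graph every vertex and every edge lies in an $\omega$-clique, so $c_G(uv)=c_G(u)=c_G(v)=\omega$. Hence \eqref{eq:vertex-local-main} holds with equality termwise. Equality in Theorem~\ref{thm:edge-main} for these graphs is part of its statement, and it carries over.

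The main obstacle is the $r=1$ equality case. Everything else is bookkeeping, but there I must either cite~\cite{LN25} precisely or run the Wilf-comparison argument above. That argument relies on the strict monotonicity of $(t-1)/t$ together with $c_G(v)\le\omega(G)$.
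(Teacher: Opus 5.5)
Your main line is the paper's proof. The inequality comes from Theorem~\ref{thm:r-one} for $r=1$ and from Theorem~\ref{thm:edge-main} with \eqref{eq:vertex-local-main} for $r\ge2$. The forward equality direction for $r\ge 2$ follows by sandwiching $\lambda^r(G)\le L_r(G)\le V_r(G)$. The $r=1$ equality case is quoted from Liu--Ning, and the paper records it as part of Theorem~\ref{thm:r-one}, so the citation is available. Your converse for $r\ge 2$ differs slightly from the paper's. You combine the equality clause of Theorem~\ref{thm:edge-main} with termwise equality in \eqref{eq:vertex-local-main}, using $c_G(uv)=c_G(u)=c_G(v)=\omega(G)$ in a complete $\omega(G)$-partite graph. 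The paper instead squeezes through \eqref{eq:long-chain} using Nikiforov's equality case. Both arguments are valid.

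You should, however, drop your fallback argument for the $r=1$ equality case, because it is not valid. From $\lambda(G)=\sum_{v}(1-1/c_G(v))$ and $\sum_{v}(1-1/c_G(v))\le n(1-1/\omega(G))$ you cannot conclude $\lambda(G)=n(1-1/\omega(G))$. Equality in the first inequality of a chain says nothing about the second. Note the contrast with your $r\ge 2$ argument, which is fine: there you assume equality between the two outer terms, and that does force equality in each link. Strict monotonicity of $(t-1)/t$ would only show that the local and global bounds coincide exactly when all $c_G(v)=\omega(G)$. It cannot establish that they coincide, which is precisely the missing step. There is no cheap patch via Wilf's inequality. Characterizing equality in the localized Wilf bound needs Liu--Ning's own argument, so the citation of Theorem~\ref{thm:r-one} is essential rather than optional.
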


The vertex-local inequality in Corollary~\ref{cor:vertex-main} immediately implies Nikiforov's walk inequality in Theorem~\ref{thm:Nikiforov-walk}. Moreover, for $r=2$, Theorem~\ref{thm:edge-main} is exactly  Liu and Ning's edge-local spectral Tur\'an theorem. Thus our results may be viewed both as a local extension of Nikiforov's walk inequality and a walk-length extension of Liu and Ning's edge-local spectral Tur\'an theorem.

We briefly sketch the proof. Starting from a Perron vector of $A(G)$, we construct a Markov transition matrix and apply an entropy-type Perron-root estimate to this chain. This yields the key comparison lemma, which controls the contribution of long walks  in terms of the first edge. Combining this comparison with Liu and Ning's weighted edge-local spectral inequality gives Theorem~\ref{thm:edge-main}. The novelty is that the Perron-vector Markov chain provides a mechanism for carrying local edge information through walks of arbitrary length.

The remainder of the paper is organized as follows. In Section~\ref{sec:markov-chain}, we prove the Markov-chain estimate based on a Perron vector. In Section~\ref{sec:localized-walk-inequality}, we prove
Theorem~\ref{thm:edge-main} and then complete the proof of
Corollary~\ref{cor:vertex-main} by determining its extremal graphs. Finally,
Section~\ref{sec:concluding-remarks} records short comparisons among the edge-local, vertex-local, and global bounds.

\section{A Markov-chain estimate from a Perron vector}\label{sec:markov-chain} 
The main purpose of this section is to prove a Markov-chain estimate based on a
Perron vector. Before the proof, we fix some notation that will be used later.  Let $S=\{v_1,\ldots,v_n\}$ be a finite set. A matrix
$P=(p_{ij})_{n\times n}$ is called a Markov transition matrix on $S$ if
$p_{ij}\ge 0$ for all $1\le i,j\le n$ and
$\sum_{j=1}^{n}p_{ij}=1$ for every $1\le i\le n$. Here $p_{ij}$ is the
probability of moving from $v_i$ to $v_j$ in one step. A probability row vector $\pi=(\pi_1,\pi_2,\ldots,\pi_n)$ is stationary for
$P$ if $\pi_i\ge 0$ for all $1\le i\le n$, $\sum_{i=1}^n \pi_i=1$, and
$\pi P=\pi$. The all-one vector is denoted by
$\one$.

Let $B=(b_{ij})_{n\times n}$ be a nonnegative matrix. We say that $P$ is supported by $B$ if $p_{ij}>0$ always implies $b_{ij}>0$.  In logarithmic sums involving the transition probabilities $p_{ij}$, terms with $p_{ij}=0$ are omitted; equivalently, we use the convention $0\ln 0=0$.

The following lemma is a standard finite-dimensional entropy-type estimate for
the Perron root. It is closely related to the variational formulas of
Donsker--Varadhan~\cite{DonskerVaradhan75} and
Friedland--Karlin~\cite{FriedlandKarlin75}.
\begin{lemma}
\label{lem:irreducible}
Let $B=(b_{ij})_{n\times n}$ be a nonnegative irreducible matrix with spectral
radius $\lambda(B)$, and let $P=(p_{ij})_{n\times n}$ be a Markov transition
matrix with stationary distribution $\pi$. Assume that $P$ is supported by
$B$. Then
$$
\ln\lambda(B)
\ge
\sum_{\substack{i,j\\ p_{ij}>0}}
\pi_i p_{ij}\ln\frac{b_{ij}}{p_{ij}}.
$$
\end{lemma}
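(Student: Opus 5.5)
We use the positive right Perron vector $x$ of $B$, so $Bx=\lambda x$ with $\lambda=\lambda(B)$ and $x_i>0$ for all $i$ by irreducibility. The idea is to compare $P$ with the ``Doob-transformed'' chain $Q=(q_{ij})$, $q_{ij}=b_{ij}x_j/(\lambda x_i)$. Since $Bx=\lambda x$, $Q$ is a Markov transition matrix, and $q_{ij}>0$ exactly when $b_{ij}>0$. For every pair with $p_{ij}>0$ we have $b_{ij}>0$ because $P$ is supported by $B$, and we can rewrite
\[
\ln\frac{b_{ij}}{p_{ij}}=\ln\lambda+\ln x_i-\ln x_j+\ln\frac{q_{ij}}{p_{ij}} .
\]

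Multiply by $\pi_i p_{ij}$ and sum over pairs with $p_{ij}>0$.

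\textbf{First term.} Since $\sum_{i,j}\pi_ip_{ij}=1$, it gives exactly $\ln\lambda$.

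\textbf{Telescoping term.} The sum $\sum\pi_ip_{ij}(\ln x_i-\ln x_j)$ splits as $\sum_i\pi_i\ln x_i-\sum_j(\pi P)_j\ln x_j$. By stationarity $\pi P=\pi$, so it vanishes. Including or omitting the pairs with $p_{ij}=0$ makes no difference here, since those terms have zero weight.

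\textbf{Relative-entropy term.} It remains to show $\sum_{i}\pi_i\sum_{j:p_{ij}>0}p_{ij}\ln(q_{ij}/p_{ij})\le 0$. For each fixed $i$, use $\ln t\le t-1$ (equivalently, Jensen's inequality, i.e.\ Gibbs' inequality):
\[
\sum_{j:\,p_{ij}>0}p_{ij}\ln\frac{q_{ij}}{p_{ij}}\le\sum_{j:\,p_{ij}>0}(q_{ij}-p_{ij})\le 1-1=0 .
\]
Weighting by $\pi_i\ge0$ preserves the sign.

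Combining the three terms gives $\sum\pi_ip_{ij}\ln(b_{ij}/p_{ij})\le\ln\lambda(B)$, which is the claim.

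The only delicate point is bookkeeping of supports and conventions. Every log term that appears has $p_{ij}>0$, hence $b_{ij}>0$ and $q_{ij}>0$, so all logarithms are finite. Irreducibility is used only to guarantee $x>0$, which is needed to define $\ln x_i$ and $Q$. We do not need $P$ itself to be irreducible or $\pi$ to be unique: any stationary $\pi$ works, because only $\pi P=\pi$ is used.
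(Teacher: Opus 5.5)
Your proof is correct and is essentially the paper's argument. The paper also takes the positive Perron vector $y$ of $B$ and drops the terms with $p_{ij}=0$ from $\lambda(B)=\sum_j b_{ij}y_j/y_i$ (your bound $\sum_{j:p_{ij}>0}q_{ij}\le 1$). It then applies Jensen's inequality to $\ln\sum_{j:p_{ij}>0}p_{ij}\frac{b_{ij}y_j}{p_{ij}y_i}$ (your Gibbs step) and cancels the $\ln y_i-\ln y_j$ terms using $\pi P=\pi$ and $P\one=\one$; writing this as a relative entropy between $P$ and the Doob transform $Q$ is a cleaner packaging of the same computation.
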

\begin{proof}[\bf Proof]
Let $y=(y_1,\ldots,y_n)^T$ be a Perron vector of $B$ corresponding to $\lambda(B)$. Then $By=\lambda(B)y$. In particular, for any fixed $i\in\{1,\ldots,n\}$, $\sum_{j=1}^{n} b_{ij}y_j=\lambda(B)y_i$. Since $y_i>0$, we have $\lambda(B)=\sum_{j=1}^{n}(b_{ij}y_j)/y_i$ and hence
$\ln\lambda(B)=\ln\sum_{j=1}^{n}(b_{ij}y_j)/y_i$. Since all terms are nonnegative,
\begin{flalign*}
   \ln\lambda(B)
   \ge
   \ln\sum_{j:p_{ij}>0}\frac{b_{ij}y_j}{y_i}
   =
   \ln\sum_{j:p_{ij}>0}p_{ij}\frac{b_{ij}y_j}{p_{ij}y_i}.
\end{flalign*}
When $p_{ij}>0$, the support assumption gives $b_{ij}>0$. Since also
$y_i,y_j>0$, the quantities $b_{ij}y_j/(p_{ij}y_i)$ are strictly positive.
Applying Jensen's inequality to the concave function $\ln t$, we get
\begin{flalign*}
   \ln\lambda(B)
   \ge
   \sum_{j:p_{ij}>0}p_{ij}
   \ln\frac{b_{ij}y_j}{p_{ij}y_i}.
\end{flalign*}
Multiplying by $\pi_i$ and summing over $i$, and using $\sum_i\pi_i=1$, we get
\[
   \ln\lambda(B)
   \ge
   \sum_{\substack{i,j\\ p_{ij}>0}}\pi_i p_{ij}
   \ln\frac{b_{ij}y_j}{p_{ij}y_i}.
\]
Hence
\begin{align*}
   \ln\lambda(B)
   \ge
   \sum_{\substack{i,j\\ p_{ij}>0}}\pi_i p_{ij}\ln\frac{b_{ij}}{p_{ij}}+
   \sum_{i,j}\pi_i p_{ij}\ln y_j
   -
   \sum_{i,j}\pi_i p_{ij}\ln y_i.
\end{align*}
Since $\pi$ is stationary for
$P$, we have
$
\sum_{i=1}^{n}\pi_i p_{ij}=\pi_j   
$ for every $j$.
Therefore
\[
   \sum_{i,j}\pi_i p_{ij}\ln y_j
   =
   \sum_{j=1}^{n}
   \left(\sum_{i=1}^{n}\pi_i p_{ij}\right)\ln y_j
   =
   \sum_{j=1}^{n}\pi_j\ln y_j.
\]
Note that $ \sum_{j=1}^{n}p_{ij}=1 $ for every $i$. This implies that 
\[
   \sum_{i,j}\pi_i p_{ij}\ln y_i
   =
   \sum_{i=1}^{n}\pi_i
   \left(\sum_{j=1}^{n}p_{ij}\right)\ln y_i
   =
   \sum_{i=1}^{n}\pi_i\ln y_i.
\]
Thus, $ \ln\lambda(B)
   \ge\sum_{\substack{i,j\\ p_{ij}>0}}\pi_i p_{ij}\ln\frac{b_{ij}}{p_{ij}}.$ This completes the proof of Lemma~\ref{lem:irreducible}.
\end{proof}

A weighted graph is a pair $W=(G,\mu)$, where $\mu:E(G)\to \mathbb{R}_{\ge 0}$
is an edge-weight function. The weighted adjacency matrix of $W$ is denoted by
$A(W)=(w_{uv})_{u,v\in V(G)}$, where $w_{uv}=\mu(uv)$ if $uv\in E(G)$, and
$w_{uv}=0$ otherwise. We write $\lambda(W)$ for the largest eigenvalue of
$A(W)$.  
\begin{lemma}\label{lem:main}
Let $G$ be a connected graph of order $n\ge 2$, and let $A(G)=(a_{ij})_{n\times n}$. For an
integer $k\ge 0$, set $z=A^k(G)\one$. Define a weighted graph
$W_k=(G,\mu_k)$ by
$
   \mu_k(uv)=\sqrt{(z_u+z_v)/2}
$
for every edge $uv\in E(G)$. Then
$
   \lambda(W_k)\ge \lambda^{1+\frac{k}{2}}(G).
$ Moreover, equality holds if and only if $A^k(G)\one=\lambda^k(G)\one$.
\end{lemma}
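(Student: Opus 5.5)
We would apply Lemma~\ref{lem:irreducible} with $B=A(W_k)$. This matrix is nonnegative and irreducible: $G$ is connected with $n\ge 2$, so $z=A^k(G)\one$ is entrywise positive and every edge weight $\mu_k(uv)$ is positive. For the Markov chain we would use the Perron vector of $G$ itself. Let $x$ be a positive Perron vector of $A(G)$ and write $\lambda=\lambda(G)$. Define
\[
p_{ij}=\frac{a_{ij}x_j}{\lambda x_i},\qquad \pi_i=\frac{x_i^2}{\sum_l x_l^2}.
\]
Then $P$ is a transition matrix supported by $A(W_k)$. Moreover $\pi_ip_{ij}=a_{ij}x_ix_j/(\lambda\sum_l x_l^2)$ is symmetric in $i,j$, so $\pi$ is stationary (the chain is reversible).

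Plugging into Lemma~\ref{lem:irreducible}, the quantity $b_{ij}/p_{ij}$ equals $\mu_k(ij)\lambda x_i/x_j$. The $\ln(x_i/x_j)$ terms cancel by stationarity, exactly as in the proof of that lemma. This leaves
\[
\ln\lambda(W_k)\ \ge\ \ln\lambda+\tfrac12\sum_{i,j}\pi_ip_{ij}\ln\frac{z_i+z_j}{2}.
\]
So it suffices to show $\sum_{i,j}\pi_ip_{ij}\ln\frac{z_i+z_j}{2}\ge k\ln\lambda$. We would do this in two steps.
\begin{enumerate}
\item By concavity, $\ln\frac{z_i+z_j}{2}\ge\frac12(\ln z_i+\ln z_j)$. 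Since the weights $\pi_ip_{ij}$ are symmetric and have row sums $\pi_i$, the sum is at least $\sum_i\pi_i\ln z_i$.
\item Express $z$ through the chain. From $(A^k)_{ij}=\lambda^k x_i (P^k)_{ij}/x_j$ we get
\[
z_i=\lambda^k x_i\sum_j (P^k)_{ij}\,x_j^{-1}.
\]
Jensen's inequality for $\ln$ with weights $(P^k)_{ij}$ then gives $\ln z_i\ge k\ln\lambda+\ln x_i-\sum_j(P^k)_{ij}\ln x_j$. Averaging against $\pi$, which is also stationary for $P^k$, cancels the $x$-terms and yields $\sum_i\pi_i\ln z_i\ge k\ln\lambda$.
\end{enumerate}
Combining these, $\ln\lambda(W_k)\ge(1+k/2)\ln\lambda$.

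For the equality case, the converse direction is immediate. If $z=\lambda^k\one$, every edge weight equals $\lambda^{k/2}$, so $A(W_k)=\lambda^{k/2}A(G)$ and $\lambda(W_k)=\lambda^{1+k/2}$.

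For the forward direction, equality forces equality in the concavity step on every edge, so $z_i=z_j$ whenever $ij\in E(G)$. By connectedness $z=c\one$ for some constant $c$. Then $\one$ is a positive eigenvector of $A^k(G)$, and by Perron--Frobenius the only eigenvalue with a positive eigenvector is the spectral radius, so $c=\lambda^k$.

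The main obstacle is the step bounding $\sum_i\pi_i\ln z_i$ from below, i.e.\ turning walk counts into a statement about the chain. The trick is to write $A^k$ as a conjugate of $\lambda^kP^k$ by $\operatorname{diag}(x)$ and then use that $\pi$ is also stationary for $P^k$. We also need to check that the zero-probability conventions cause no trouble. They do not: $p_{ij}>0$ exactly on edges, where all quantities involved are positive.
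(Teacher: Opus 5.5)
Your proposal is correct and follows the paper's proof essentially step for step. It uses the same Perron-vector chain $p_{ij}=a_{ij}x_j/(\lambda x_i)$ with $\pi_i\propto x_i^2$, the per-edge concavity (AM--GM) step, the conjugation $P^k=\lambda^{-k}D_x^{-1}A^k(G)D_x$ followed by Jensen and stationarity of $\pi$ under $P^k$, and the same equality analysis via $z_i=z_j$ on edges; the only cosmetic differences are that you obtain stationarity from reversibility and identify $c=\lambda^k$ by Perron--Frobenius, whereas the paper checks $\pi P=\pi$ directly and computes $c$ from $x^Tz=\lambda^k(G)\,x^T\one$.
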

\begin{proof}[\bf Proof]
Since $G$ is connected, $A(G)$ is a nonnegative irreducible matrix and hence $\lambda(G)>0$. By the Perron--Frobenius theorem, there exists a
positive unit Perron vector $x=(x_1,\ldots,x_n)^T$ corresponding to
$\lambda(G)$. That is, $A(G)x=\lambda(G)x$ and $\sum_{i=1}^n x_i^2=1$. We now define a matrix $P=(p_{ij})_{n\times n}$ by
$$
p_{ij}=\frac{a_{ij}x_j}{\lambda(G)x_i}.
$$
We claim that $P$ is a Markov transition matrix. First, $p_{ij}\ge 0$ for all $i,j$, since $a_{ij}\ge 0$ and $x_i,x_j>0$. Next, for every $i\in \{1,\ldots,n\}$,
\begin{flalign*}
   \sum_{j=1}^{n} p_{ij}
   =
   \frac{1}{\lambda(G)x_i}\sum_{j=1}^{n} a_{ij}x_j 
   =
   \frac{(A(G)x)_i}{\lambda(G)x_i} 
   =
   \frac{\lambda(G)x_i}{\lambda(G)x_i} 
   =
   1.
\end{flalign*}
Therefore $P$ is indeed a Markov transition matrix. Next we show that the  vector $\pi=(\pi_1,\pi_2,\ldots,\pi_n)$ defined by
$\pi_i=x_i^2$
is stationary for $P$. Since $\sum_{i=1}^n x_i^2=1$,  $\sum_{i=1}^n \pi_i=1$. Moreover, for every $j\in \{1,\ldots,n\}$,
\begin{flalign*}
   \sum_{i=1}^{n}\pi_i p_{ij}
   =
   \sum_{i=1}^{n} x_i^2 \frac{a_{ij}x_j}{\lambda(G)x_i} 
   =
   \frac{x_j}{\lambda(G)}\sum_{i=1}^{n} a_{ij}x_i 
=
   \frac{x_j}{\lambda(G)}\lambda(G)x_j 
   =
   x_j^2 
   &=
   \pi_j.
\end{flalign*}
This implies that 
$$
\pi P=\left(\sum_{i=1}^{n}\pi_i p_{i1},\ldots,\sum_{i=1}^{n}\pi_i p_{in}\right)
=(\pi_1,\ldots,\pi_n)=\pi,
$$
so $\pi$ is stationary for $P$.

The vector $z=A^k(G)\one$ has strictly positive coordinates.  This is clear when
$k=0$, since then $z=\one$.  If $k\ge 1$, then every vertex of $G$ has positive
degree because $G$ is connected and has at least two vertices.  Hence from each
vertex there is a walk of length $k$, so every coordinate of $z$ is positive.
Thus $A(W_k)$ has the same support as $A(G)$, and therefore $A(W_k)$ is irreducible. Write $A(W_k)=(w_{ij})_{n\times n}$. Applying Lemma~\ref{lem:irreducible} with $B=A(W_k)$ and $P$, we have 
\begin{equation}\label{eq:2}
  \ln \lambda(W_k)
  \ge
  \sum_{\substack{i,j\\ p_{ij}>0}}\pi_i p_{ij}\ln\frac{w_{ij}}{p_{ij}}
  =
  \sum_{\substack{i,j\\ p_{ij}>0}}\pi_i p_{ij}\ln\frac{1}{p_{ij}}
  +
  \sum_{\substack{i,j\\ p_{ij}>0}}\pi_i p_{ij}\ln w_{ij}.
\end{equation}
Since the sums are taken over pairs with $p_{ij}>0$, we have $a_{ij}=1$ for every term. Hence $1/p_{ij}=\lambda(G)x_i/x_j$ in these sums. Therefore 
\begin{flalign}\label{eq:3}
    \sum_{\substack{i,j\\ p_{ij}>0}}\pi_i p_{ij}\ln\frac{1}{p_{ij}}
    = \sum_{\substack{i,j\\ p_{ij}>0}}\pi_i p_{ij}\ln\frac{\lambda(G)x_i}{x_j}
    \notag=& \ln \lambda(G)\sum_{i,j}\pi_i p_{ij}+ \sum_{i,j}\pi_i p_{ij}\ln x_i - \sum_{i,j}\pi_i p_{ij}\ln x_j\\
    =& \ln \lambda(G)\sum_{i=1}^n\pi _i=\ln \lambda(G).
\end{flalign}
By \eqref{eq:2} and \eqref{eq:3}, we  have $\ln \lambda(W_k)\ge \ln \lambda(G)+  \sum_{\substack{i,j\\ p_{ij}>0}}\pi_i p_{ij}\ln w_{ij}$. If $v_iv_j\in E(G)$, then
$\ln w_{ij}=\frac{1}{2}\ln{\frac{z_i+z_j}{2}}$.  Since
$\frac{z_i+z_j}{2}\ge (z_iz_j)^{1/2}$, 
\begin{flalign}
   \ln w_{ij}= \frac{1}{2}\ln{\frac{z_i+z_j}{2}}\ge \frac{1}{4}(\ln z_i+\ln z_j).
\end{flalign}
Therefore,
\begin{flalign*}
    \sum_{\substack{i,j\\ p_{ij}>0}}\pi_i p_{ij}\ln w_{ij}
    \ge  \frac{1}{4}\sum_{i,j}\pi_i p_{ij}(\ln z_i+\ln z_j)
    = \frac{1}{4}\sum_{i=1}^n\pi_i \ln z_i
      + \frac{1}{4}\sum_{j=1}^n\pi_j \ln z_j 
    =\frac{1}{2}\sum_{i=1}^n\pi_i \ln z_i.
\end{flalign*}
Next, it suffices to prove 
\begin{flalign}\label{main-eq}
   \sum_{i=1}^n\pi_i \ln z_i\ge k\ln \lambda(G).
\end{flalign}
By the definition of $P$, we have  $P=\lambda(G)^{-1}D_x^{-1}A(G)D_x$, where 
\begin{flalign*}
  D_x=
\begin{pmatrix}
x_1 & 0   & \cdots & 0 \\
0   & x_2 & \cdots & 0 \\
\vdots & \vdots & \ddots & \vdots \\
0   & 0   & \cdots & x_n
\end{pmatrix}.
\end{flalign*}
Hence, for every $k\ge 0$, $P^k=\frac{1}{\lambda^k(G)}D_x^{-1}A^k(G)D_x$. We write $P^k=(p^{(k)}_{ij})_{n\times n}$ and $A^k(G)=(a^{(k)}_{ij})_{n\times n}$. Then $p^{(k)}_{ij}=\frac{a^{(k)}_{ij}x_j}{\lambda^k(G)x_i}.$
This implies that  $$z_i=\sum_{j=1}^n a^{(k)}_{ij}=\sum_{j=1}^n \frac{\lambda^k(G)p^{(k)}_{ij}x_i}{x_j}=\lambda^k(G)x_i\sum_{j=1}^n p^{(k)}_{ij}\frac{1}{x_j}.$$
Therefore
$$\ln z_i=\ln\left(\lambda^k(G)x_i\sum_{j=1}^n p^{(k)}_{ij}\frac{1}{x_j}\right)=\ln{\lambda^k(G)}+\ln x_i+\ln\left(\sum_{j=1}^n p^{(k)}_{ij}\frac{1}{x_j}\right).$$

Note that $P\one=\one$. Hence $P^k\one=\one$ and $\sum_{j=1}^n p^{(k)}_{ij}=1$.
Since  $\ln t$ is concave, Jensen's inequality yields
\begin{flalign*}
\ln\left(\sum_{j=1}^n p^{(k)}_{ij}\frac{1}{x_j}\right)
  \ge
\sum_{j=1}^n p^{(k)}_{ij}\ln\frac{1}{x_j}.
\end{flalign*}
Hence
\begin{equation}
\begin{aligned}
  \ln z_i
  \ge k\ln\lambda(G)+
  \ln x_i+\sum_{j=1}^n p^{(k)}_{ij}\ln\frac{1}{x_j} 
  = k\ln\lambda(G)+
  \ln x_i-\sum_{j=1}^n p^{(k)}_{ij}\ln{x_j}.
\end{aligned}
\label{eq:Jensen-last}
\end{equation}
Multiplying \eqref{eq:Jensen-last} by $\pi_i$ and summing over $1\le i\le n$, we have
\begin{flalign*}
    \sum_{i=1}^n \pi_i\ln z_i \ge \sum_{i=1}^n k\ln \lambda(G)\pi_i+ \sum_{i=1}^n \pi_i\ln x_i- \sum_{i=1}^n\pi_i\sum_{j=1}^n p^{(k)}_{ij}\ln{x_j}
\end{flalign*}
Note that $\pi P=\pi$. Therefore, $\pi P^k=\pi$. 
Then
\begin{align}\label{eq:finally}
    \notag\sum_{i=1}^n \pi_i\ln z_i
    &\ge k\ln \lambda(G)
      +\sum_{i=1}^n \pi_i\ln x_i
      -\sum_{j=1}^n\ln{x_j}\sum_{i=1}^n \pi_i p^{(k)}_{ij}\\
    &= k\ln \lambda(G)
      +\sum_{i=1}^n \pi_i\ln x_i
      \notag  -\sum_{j=1}^n\ln{x_j}\pi_j\\
  &= k\ln \lambda(G).
\end{align}
This proves \eqref{main-eq}, and hence  $\lambda(W_k)\ge \lambda^{1+\frac{k}{2}}(G)$.

Finally, we prove the equality statement. Suppose first that
$\lambda(W_k)=\lambda^{1+\frac{k}{2}}(G)$. Then equality holds throughout the
chain of inequalities above. In particular, equality holds in
$(z_i+z_j)/2\ge \sqrt{z_i z_j}$ for every edge $v_iv_j\in E(G)$. Since $z$ has positive entries, this gives $z_i=z_j$ for every edge
$v_iv_j\in E(G)$. As $G$ is connected, we have $z=c\one$ for some constant
$c$. Using the positive Perron vector $x$ of $A(G)$, we obtain
$$
c x^T\one
=x^Tz
=x^T A^k(G)\one
=(A^k(G)x)^T\one
=\lambda^k(G)x^T\one.
$$
Since $x^T\one>0$, it follows that $c=\lambda^k(G)$. Hence
$A^k(G)\one=\lambda^k(G)\one$.

Conversely, suppose that $A^k(G)\one=\lambda^k(G)\one$. Then
$z=\lambda^k(G)\one$, and so $\mu_k(uv)=\lambda^{k/2}(G)$ for every edge
$uv\in E(G)$. Hence $A(W_k)=\lambda^{k/2}(G)A(G)$, and therefore
$$
\lambda(W_k)=\lambda^{k/2}(G)\lambda(G)=\lambda^{1+\frac{k}{2}}(G).
$$
This completes the proof of Lemma~\ref{lem:main}.
\end{proof}
Recall that a
connected bipartite graph is called semiregular if, with respect to its
bipartition $X\cup Y$, all vertices in $X$ have the same degree and all
vertices in $Y$ have the same degree.

\begin{lemma}\label{lem:main-equality-1}
Let $G$ be a connected graph of order $n\ge 2$, and let $k\ge 1$ be an integer.
Then $A^k(G)\one=\lambda^k(G)\one$ holds if and only if $G$ is regular when
$k$ is odd, and $G$ is either regular or semiregular bipartite when $k$ is
even.
\end{lemma}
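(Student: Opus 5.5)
The plan is to handle the two directions separately, using the spectral decomposition of the symmetric matrix $A=A(G)$ together with Perron--Frobenius for the connected graph $G$. Write $\lambda=\lambda(G)$, let $x$ be the positive unit Perron vector, and let $y$ be an eigenvector for $-\lambda$ when $G$ is bipartite. For a connected graph, $-\lambda$ is an eigenvalue exactly when $G$ is bipartite. In that case $y$ can be taken to be $x$ with signs flipped on one side of the bipartition $X\cup Y$. All other eigenvalues $\theta$ satisfy $|\theta|<\lambda$.

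\textbf{Sufficiency.} If $G$ is $d$-regular, then $\lambda=d$ and $A\one=d\one$, so $A^k\one=\lambda^k\one$ for every $k$. If $G$ is semiregular bipartite with degrees $d_X$ on $X$ and $d_Y$ on $Y$, then $A^2\one=d_Xd_Y\one$. Indeed, each $v\in X$ has $d_X$ neighbours, each of degree $d_Y$, and symmetrically for $v\in Y$. Since $\lambda^2=d_Xd_Y$ for semiregular bipartite graphs (check $A^2x=d_Xd_Y x$ on the Perron vector, or note that $\one$ is a positive eigenvector of $A^2$ restricted to each side), we get $A^k\one=(d_Xd_Y)^{k/2}\one=\lambda^k\one$ for even $k$.

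\textbf{Necessity.} Suppose $A^k\one=\lambda^k\one$, and expand $\one=\sum_\theta \one_\theta$ into orthogonal eigencomponents. Then $A^k\one=\sum\theta^k\one_\theta$, and comparing components gives $\theta^k=\lambda^k$ for every $\theta$ with $\one_\theta\neq 0$.
\begin{itemize}
\item If $k$ is odd, the only such real $\theta$ is $\lambda$. So $\one$ lies in the $\lambda$-eigenspace, which is spanned by $x$. Hence $A\one=\lambda\one$ and $G$ is regular.
\item If $k$ is even, $\theta\in\{\lambda,-\lambda\}$. When $G$ is not bipartite, this again forces $\one\in\operatorname{span}(x)$, so $G$ is regular. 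When $G$ is bipartite, $\one=\alpha x+\beta y$. Restricted to $X$ this says $x$ is constant on $X$, and similarly $x$ is constant on $Y$. The eigen-equation $Ax=\lambda x$ then gives $d(v)x_Y=\lambda x_X$ for every $v\in X$, so all vertices of $X$ have the same degree; likewise for $Y$. Thus $G$ is semiregular bipartite, which includes the regular case.
\end{itemize}

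I expect no serious obstacle. The only points needing care are, first, the fact that $\pm\lambda$ are simple eigenvalues with the stated eigenvectors (standard Perron--Frobenius for connected graphs); second, the observation that the eigencomponent argument needs only $\theta^k=\lambda^k$, which for real $\theta$ and $|\theta|\le\lambda$ pins down $\theta=\pm\lambda$. An alternative route avoiding spectra is to use $\one^TA^k\one=n\lambda^k$ together with equality in the Rayleigh quotient for $A^k$. That works only when $\lambda^k$ is the largest eigenvalue of $A^k$ with a known eigenspace, so I would use the spectral decomposition instead.
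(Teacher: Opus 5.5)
Your proposal is correct and follows essentially the same route as the paper. Both expand $\one$ in an eigenbasis of $A(G)$, deduce $\theta^k=\lambda^k$ for every eigenvalue carrying a nonzero component of $\one$, and split by the parity of $k$ and by bipartiteness, using the sign-flipped Perron vector to force $x$ to be constant on $X$ and on $Y$; both then verify sufficiency directly for regular and semiregular bipartite graphs. Your justification of $\lambda^2(G)=d_Xd_Y$ via a positive eigenvector of $A^2(G)$ is slightly more explicit than the paper's, which asserts it without comment; the cleanest one-liner is $x^TA^2(G)\one=\lambda^2(G)x^T\one$ together with $x^T\one>0$.
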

\begin{proof}[\bf Proof]
Since $G$ is connected, $A(G)$ is a real symmetric nonnegative irreducible
matrix. Hence, we may choose an orthonormal eigen-basis
$\alpha_1,\alpha_2,\ldots,\alpha_n$ of $\mathbb R^n$ with corresponding
eigenvalues $\lambda_1,\lambda_2,\ldots,\lambda_n$, ordered so that
$\lambda_1=\lambda(G)$. Thus $A(G)\alpha_j=\lambda_j\alpha_j$ for every
$1\le j\le n$, and $\alpha_1$ may be chosen as the normalized positive Perron
vector. Write $\one=\sum_{j=1}^n \gamma_j\alpha_j$. Then
\begin{flalign*}
  A^k(G)\one= A^k(G)\left(\sum_{j=1}^n \gamma_j\alpha_j\right)=\sum_{j=1}^n\gamma_j\lambda_j^k\alpha_j,  
\end{flalign*}
whereas $\lambda^k(G)\one=\sum_{j=1}^n\gamma_j\lambda^k(G)\alpha_j$. Therefore, $A^k(G)\one=\lambda^k(G)\one$ is equivalent to $$\sum_{j=1}^n\gamma_j(\lambda_j^k-\lambda_1^k)\alpha_j=0.$$
Since $\alpha_1,\ldots,\alpha_n$ are linearly independent, this implies that  $ \gamma_j(\lambda_j^k-\lambda_1^k)=0 $ for every $j$. Thus every eigenvector that appears with nonzero coefficient in the expansion
of $\one$ has eigenvalue $\lambda_j$ satisfying $\lambda_j^k=\lambda_1^k$. If $k$ is odd, then every eigenvector that appears with nonzero coefficient in the expansion
of $\one$ has eigenvalue $\lambda_j$ satisfying $\lambda_j=\lambda_1$. It follows that 
$$A(G)\one=A(G)\sum_{i:\gamma_i\neq 0}\gamma_i\alpha_i=\sum_{i:\gamma_i\neq 0}\gamma_iA(G)\alpha_i=\lambda_1\sum_{i:\gamma_i\neq 0}\gamma_i\alpha_i=\lambda_1\one.$$
Therefore, $G$ is regular. Now assume that $k$ is even.   Then an eigenvalue appearing in the expansion of
$\one$ can only be $\lambda_1$ or $-\lambda_1$. If $G$ is not bipartite, then
$-\lambda_1$ is not an eigenvalue by the standard spectral characterization of
connected bipartite graphs \cite[Theorem~3.11]{BrouwerHaemers12}.   Thus every eigenvector appearing with nonzero coefficient in the expansion of
$\one$ has eigenvalue $\lambda_1$. Hence $A(G)\one=\lambda_1\one$, and so $G$ is regular.  

Assume now that $G$ is bipartite, and let $X\cup Y$ be its bipartition. Since
$G$ is connected, the Perron root $\lambda_1$ is simple, and hence the $\lambda_1$-eigenspace is spanned by $\alpha_1$. Since
$G$ is bipartite, the vector $\alpha_1'$ obtained from $\alpha_1$ by changing
the sign on one part, that is,
$$
(\alpha_1')_v=
\begin{cases}
(\alpha_1)_v, & v\in X,\\
-(\alpha_1)_v, & v\in Y,
\end{cases}
$$
is an eigenvector corresponding to $-\lambda_1$. Moreover, the
$-\lambda_1$-eigenspace is spanned by $\alpha_1'$.

Thus the condition $A^k(G)\one=\lambda^k(G)\one$ implies that
$\one$ lies in the span of $\alpha_1$ and $\alpha_1'$. Hence
$\one=a\alpha_1+b\alpha_1'$ for some real numbers $a,b$. Therefore, $(\alpha_1)_v=1/(a+b)$ for every $v\in X$, and
$(\alpha_1)_v=1/(a-b)$ for every $v\in Y$. It follows that $\alpha_1$ is constant on $X$ and constant on $Y$. Write $(\alpha_1)_v=s$ for $v\in X$ and $(\alpha_1)_v=t$ for $v\in Y$, where
$s,t>0$. From $A(G)\alpha_1=\lambda(G)\alpha_1$, for every $u\in X$ we have
$d_G(u)t=\lambda(G)s$, and for every $v\in Y$ we have
$d_G(v)s=\lambda(G)t$. Hence all vertices in $X$ have the same degree and all
vertices in $Y$ have the same degree. Thus $G$ is semiregular bipartite.

Conversely, if $G$ is regular, say $d$-regular, then
$A(G)\one=d\one$ and $\lambda(G)=d$. Hence
$A^k(G)\one=\lambda^k(G)\one$ for every $k\ge 1$.

Now suppose that $G$ is semiregular bipartite with bipartition $X\cup Y$. Let
$d_X$ and $d_Y$ be the common degrees of the vertices in $X$ and $Y$,
respectively. Then $A^2(G)\one=d_Xd_Y\one$: indeed, for $u\in X$,
$(A^2(G)\one)_u$ is the sum of the degrees of the neighbors of $u$, which is
$d_Xd_Y$, and the same argument applies to vertices in $Y$. Therefore
$\lambda^2(G)=d_Xd_Y$. Hence, for every even $k$,
\begin{align*}
A^k(G)\one
=(A^2(G))^{k/2}\one 
=(d_Xd_Y)^{k/2}\one 
=\lambda^k(G)\one.
\end{align*}
Thus semiregular bipartite graphs satisfy
$A^k(G)\one=\lambda^k(G)\one$ for every even $k$. This completes the proof of
Lemma~\ref{lem:main-equality-1}.
\end{proof}
\section{The localized walk inequality}\label{sec:localized-walk-inequality}

In this section, we prove the edge-local and vertex-local walk inequalities and
determine their extremal graphs. We shall use the standard walk-counting
identities $w_r(v)=(A^{r-1}(G)\one)_v$ and
$w_r(G)=\one^T A^{r-1}(G)\one$. We first recall three results of Liu and Ning that will be used below.

\begin{theorem}[Liu--Ning~\cite{LN25}]
\label{thm:liu-ning-weighted}
Let $W=(G,\mu)$ be a weighted graph. Then
$$\lambda^2(W)
   \le
   2\sum_{uv\in E(G)}
   \frac{c_G(uv)-1}{c_G(uv)}\mu^2(uv).$$
\end{theorem}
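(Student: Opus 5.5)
The plan is a Motzkin--Straus argument combined with Cauchy--Schwarz. We may assume $G$ has an edge and that $\lambda(W)>0$; otherwise there is nothing to prove. Since $A(W)$ is symmetric and nonnegative, it has a nonnegative unit eigenvector $x$ for $\lambda(W)$. Then
\[
\lambda(W)=x^TA(W)x=2\sum_{uv\in E(G)}\mu(uv)x_ux_v .
\]
Write $c=c_G(uv)$. For each edge, split the term as $\mu(uv)\sqrt{(c-1)/c}$ times $x_ux_v\sqrt{c/(c-1)}$ and apply Cauchy--Schwarz:
\[
\lambda^2(W)\le \Bigl(2\sum_{uv\in E(G)}\frac{c_G(uv)-1}{c_G(uv)}\mu^2(uv)\Bigr)\Bigl(2\sum_{uv\in E(G)}\frac{c_G(uv)}{c_G(uv)-1}x_u^2x_v^2\Bigr).
\]
Every edge satisfies $c_G(uv)\ge 2$, so the weights are finite. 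The theorem therefore reduces to a purely combinatorial statement. Put $y_v=x_v^2$, so that $y$ lies in the standard simplex $\Delta$. It suffices to show
\[
f(y):=\sum_{uv\in E(G)}\frac{c_G(uv)}{c_G(uv)-1}\,y_uy_v\le \frac12\qquad\text{for all } y\in\Delta,
\]
which is an edge-local Motzkin--Straus inequality.

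To prove it, let $y$ be a maximizer of $f$ on the compact set $\Delta$ whose support has minimum size. Suppose two vertices $u\ne v$ in the support are nonadjacent. Move mass along the segment $y+t(e_u-e_v)$. Because $f$ contains no $y_uy_v$ term, $f$ restricted to this line is affine in $t$. Hence we can push $t$ to one endpoint, where $y_u$ or $y_v$ vanishes, without decreasing $f$. This contradicts minimality of the support. Therefore the support of $y$ is a clique $K$ of some order $s$.

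If $s=1$, then $f(y)=0$. If $s\ge 2$, every edge $uv$ inside $K$ lies in the clique $K$, so $c_G(uv)\ge s$. Since $t\mapsto t/(t-1)$ is decreasing for $t>1$, each weight is at most $s/(s-1)$. Combining this with $\sum_{u<v\in K}y_uy_v\le \frac12\bigl(1-\frac1s\bigr)$, which is Cauchy--Schwarz applied to $\sum_{v\in K}y_v=1$, we get
\[
f(y)\le \frac{s}{s-1}\cdot\frac{s-1}{2s}=\frac12 .
\]
Substituting $f(y)\le\frac12$ into the Cauchy--Schwarz bound gives the theorem.

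The main point needing care is the support-reduction step. Its key observation is that $c_G(uv)$ depends only on $G$ and not on $y$, so the objective is still a quadratic form with no cross term for nonadjacent pairs. That is exactly why the local weights do not spoil the classical Motzkin--Straus shifting argument. The monotonicity of $t/(t-1)$ is what lets the local clique size inside $K$ be bounded below by the global size $s$ of the support clique.
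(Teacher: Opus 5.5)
Your proof is correct and complete. The paper itself gives no proof of this statement: it quotes the result from Liu and Ning and uses it as a black box, with $\mu=\mu_{r-2}$, in the proof of Theorem~\ref{thm:edge-main}. So there is no internal argument to compare against.

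Your route is the natural one: a localization of Nikiforov's proof of $\lambda^2\le 2m(1-1/\omega)$, in two steps.
\begin{itemize}
\item A Cauchy--Schwarz step against a nonnegative unit vector attaining $\lambda(W)=x^TA(W)x$. This splits off the weights into the factor $2\sum_{uv}\frac{c_G(uv)-1}{c_G(uv)}\mu^2(uv)$.
\item An edge-local Motzkin--Straus bound $\sum_{uv\in E(G)}\frac{c_G(uv)}{c_G(uv)-1}y_uy_v\le\frac12$ on the simplex. You prove this directly by support shifting.
\end{itemize}

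The details check out.
\begin{itemize}
\item Along $y+t(e_u-e_v)$ the objective is affine, because there are no loops and $uv\notin E(G)$. At a maximizer with $y_u,y_v>0$ it is therefore constant, so zeroing a coordinate gives another maximizer with smaller support.
\item On the support clique $K$, every edge satisfies $c_G(uv)\ge |K|$, and $\sum_{v\in K}y_v^2\ge 1/|K|$. This gives the bound $\frac12$.
\item Nothing uses connectivity, irreducibility or strict positivity of $\mu$. So your argument covers the paper's application to $W_{r-2}$ directly.
\end{itemize}

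Your argument does not yield the equality cases, since the minimum-support choice discards the other maximizers; the statement does not ask for them. The paper never needs equality in this weighted form: its equality analysis goes through Lemma~\ref{lem:main} and the unweighted Theorem~\ref{thm:r-two}.
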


\begin{theorem}[Liu--Ning~\cite{LN25}]
\label{thm:r-one}
For every graph $G$,
$$
\lambda(G)\le \sum_{v\in V(G)}\frac{c_G(v)-1}{c_G(v)}.
$$
Equality holds if and only if $G$ is edgeless or, after deleting isolated
vertices, the remaining graph is a regular complete multipartite graph.
\end{theorem}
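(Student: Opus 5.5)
The plan is to derive this localized Wilf inequality from a \emph{weighted local Motzkin--Straus inequality} combined with the Cauchy--Schwarz inequality, mirroring the classical proof of Wilf's bound. We may assume $G$ has an edge. For each vertex $v$ put $\beta_v=\sqrt{(c_G(v)-1)/c_G(v)}$. The key claim I would aim for is that for every nonnegative vector $y$,
\begin{equation*}
2\sum_{uv\in E(G)}y_uy_v\le\Bigl(\sum_{v\in V(G)}\beta_v y_v\Bigr)^2 .
\end{equation*}
This is exact for $K_n$ with $y=\one$, since both sides equal $n(n-1)$.

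Granting the claim, let $x$ be a nonnegative unit eigenvector for $\lambda(G)$. Then
\begin{equation*}
\lambda(G)=x^TA(G)x=2\sum_{uv\in E(G)}x_ux_v\le\Bigl(\sum_v\beta_vx_v\Bigr)^2\le\Bigl(\sum_v\beta_v^2\Bigr)\Bigl(\sum_v x_v^2\Bigr)=\sum_v\frac{c_G(v)-1}{c_G(v)},
\end{equation*}
which is the desired bound.

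To prove the claim, I would consider the ratio $F(y)=2\sum_{uv\in E}y_uy_v\big/(\sum_v\beta_vy_v)^2$ over nonnegative $y$ with $\sum_v\beta_vy_v>0$. Isolated vertices have $\beta_v=0$ and do not affect the numerator, so they can be discarded. Since $F$ is scale invariant, a maximizer exists on a compact slice; choose one with minimal support. If the support contains two nonadjacent vertices $u,w$, move mass along $y_u\mapsto y_u+t/\beta_u$, $y_w\mapsto y_w-t/\beta_w$. This keeps the denominator fixed, and because $uw\notin E$ the numerator is affine in $t$. Hence one endpoint, where $y_u$ or $y_w$ vanishes, is at least as good, contradicting minimality. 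So the support is a clique $K$ of order $s$. Every $v\in K$ has $c_G(v)\ge s$, hence $\beta_v\ge\sqrt{1-1/s}$. The classical Motzkin--Straus bound on $K_s$ then gives
\begin{equation*}
2\sum_{uv\subseteq K}y_uy_v\le\Bigl(1-\frac1s\Bigr)\Bigl(\sum_{v\in K}y_v\Bigr)^2\le\Bigl(\sum_{v\in K}\beta_vy_v\Bigr)^2 .
\end{equation*}
The support-reduction step is where I expect the main care to be needed. Specifically, one must normalize by $\beta$ rather than by $\sum_v y_v$ so that the perturbation is genuinely affine.

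For the equality case, I would trace back through the chain. Cauchy--Schwarz forces $x_v$ to be proportional to $\beta_v$ on all of $V(G)$, and this vector is positive exactly on the non-isolated vertices. The Motzkin--Straus step must also be tight for $x$. Tightness should force every vertex to see the same local clique number $c$ on each component. Together with $A(G)x=\lambda(G)x$ for a vector $x$ that is constant on non-isolated vertices, this gives regularity. Tightness in the clique reduction should then force complete multipartiteness, via the known equality case of Motzkin--Straus, where maximizers spread over all maximum cliques only in complete $c$-partite graphs. This equality analysis, rather than the inequality itself, is the delicate part. I would first try to show that any missing edge between two vertices in different would-be parts allows a strict increase of $F$ by the perturbation above.
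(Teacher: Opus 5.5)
The paper does not prove this statement. It quotes it from Liu--Ning and uses it only as a black box for the case $r=1$ of Corollary~\ref{cor:vertex-main}, so your argument has to stand on its own.

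\textbf{The inequality.} This part is correct. The $\beta$-weighted transfer across a non-edge keeps $\sum_v\beta_vy_v$ fixed and makes the numerator affine, so a maximizer of $F$ with minimal support lives on a clique $K$. On $K$ one has $\beta_v\ge\sqrt{1-1/|K|}$, and Cauchy--Schwarz gives $F\le1$. Then $\lambda(G)=x^TA(G)x\le(\sum_v\beta_vx_v)^2\le\sum_v\beta_v^2$ is exactly the bound.

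\textbf{The equality case has a gap.} The step ``tightness should force every vertex to see the same local clique number'' has no argument behind it. The tool you propose cannot supply one.
\begin{itemize}
\item Equality forces $x=\beta/\|\beta\|$, where $\|\beta\|^2=\sum_v\beta_v^2$, together with $A(G)x=\lambda(G)x$.
\item Move along $x+t(e_u/\beta_u-e_w/\beta_w)$ for a non-edge $uw$, with $e_u$ the standard basis vectors. The linear term of the numerator is $2t\lambda(G)(x_u/\beta_u-x_w/\beta_w)=0$, and the quadratic term is $-2a_{uw}t^2/(\beta_u\beta_w)=0$.
\item So $F$ is exactly constant along this transfer, and no strict increase can ever appear.
\item The Motzkin--Straus equality case is about the unweighted Lagrangian. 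That agrees with $F$ only once you know $c_G$ is constant on the non-isolated vertices, which is the very step left unproved.
\end{itemize}

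\textbf{How to close it.} You need a second-order perturbation through three vertices.
\begin{itemize}
\item Let $V'$ be the set of non-isolated vertices. Then $x>0$ on $V'$ and $F(x)=1=\max F$.
\item Take $d$ supported on $V'$ with $\sum_v\beta_vd_v=0$. The denominator is unchanged. The numerator becomes $x^TA(G)x+2t\lambda(G)x^Td+t^2d^TA(G)d=x^TA(G)x+t^2d^TA(G)d$, since $x^Td=0$.
\item Suppose $u,w,z\in V'$ are distinct with $uw,wz\notin E(G)$ but $uz\in E(G)$. Take $d=e_u/\beta_u+e_z/\beta_z-2e_w/\beta_w$. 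Then $d^TA(G)d=2/(\beta_u\beta_z)>0$, so $F(x+td)>1$ for small $t\neq0$. This contradicts your claim $F\le1$.
\item Hence non-adjacency is an equivalence relation on $V'$, and $G[V']$ is complete $t$-partite with $t\ge2$.
\item In particular $G[V']$ is connected. This disposes of several nontrivial components, a case your sketch leaves open.
\item Now $c_G\equiv t$ on $V'$, so $\beta$ and hence $x$ are constant on $V'$, and $A(G)x=\lambda(G)x$ gives regularity.
\item The converse is a direct check. A regular complete $t$-partite graph on $n'$ vertices, plus isolated vertices, has $\lambda=n'(1-1/t)=\sum_v(c_G(v)-1)/c_G(v)$. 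The edgeless case is trivial.
\end{itemize}
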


\begin{theorem}[Liu--Ning~\cite{LN26}]
\label{thm:r-two}
Let $G$ be a graph with at least one edge. Then
$$
\lambda^2(G)\le 2\sum_{uv\in E(G)}\frac{c_G(uv)-1}{c_G(uv)}.
$$
Equality holds if and only if, after deleting isolated vertices, the remaining
graph is either a complete bipartite graph, or a regular complete
$\omega(G)$-partite graph with $\omega(G)\ge 3$.
\end{theorem}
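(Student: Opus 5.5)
The inequality should follow at once from Theorem~\ref{thm:liu-ning-weighted} with the constant weight $\mu\equiv 1$, since then $A(W)=A(G)$. The real work is the equality characterization. For that I would re-derive the bound along a route whose equality conditions can be traced. Delete isolated vertices; they change neither side. If $G$ is disconnected, pass to a component $H$ with $\lambda(H)=\lambda(G)$. The other components contribute positively to the right-hand side, so equality forces $G$ to be connected.

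Let $x$ be the unit Perron vector and put $y_v=x_v^2$, so $y$ lies in the simplex. I would write $\lambda=2\sum_{uv\in E}x_ux_v$ and apply Cauchy--Schwarz with weights $\theta_{uv}=(c_G(uv)-1)/c_G(uv)$:
\[
\lambda^2\le\Bigl(2\sum_{uv\in E}\theta_{uv}\Bigr)\Bigl(2\sum_{uv\in E}\theta_{uv}^{-1}y_uy_v\Bigr).
\]
It then remains to prove the localized Motzkin--Straus inequality
\[
\sum_{uv\in E}\frac{c_G(uv)}{c_G(uv)-1}\,y_uy_v\le\frac12 \quad\text{for all } y \text{ in the simplex},
\]
which is the edge-local Motzkin--Straus bound (Bradač / Malec--Tompkins type). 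I would take it as the engine behind Theorem~\ref{thm:liu-ning-weighted}.

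Equality in Cauchy--Schwarz forces $x_ux_v=\kappa\,\theta_{uv}$ on every edge, for some constant $\kappa$. Equality in the local Motzkin--Straus bound forces $y=x^2$ (which has full support, since $G$ is connected) to be a maximizer. The standard equality analysis of local Motzkin--Straus should then give two things: $G$ is complete multipartite, and if $\omega\ge 3$ the weights are uniform on the parts.

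For that analysis I would use a Zykov-symmetrization/Lagrange-multiplier step. If $u\not\sim v$ and $y_u,y_v>0$, moving mass between them keeps the objective linear, and full-support optimality forces equal marginal gains. This should force nonadjacency to be an equivalence relation, i.e.\ $G$ is complete multipartite. In that case $c_G(uv)=\omega$ for every edge, so $\theta$ is constant. Then $x_ux_v$ is constant on edges. If $\omega\ge3$, any three vertices in distinct parts form a triangle, so $x_u=x_v=x_w$; hence $x$ is constant on $G$, and $G$ is regular. If $\omega=2$, then $G=K_{a,b}$ and no regularity is forced.

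For the converse I would verify directly. For $K_{a,b}$ we have $\lambda^2=ab=2\cdot ab\cdot\tfrac12$. For the regular complete $t$-partite graph with parts of size $s$, both sides equal $(t-1)^2s^2$.

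I expect the main obstacle to be the equality case of the local Motzkin--Straus inequality. The delicate point is excluding non-multipartite maximizers with full support, because $c_G(uv)$ varies between edges and the usual symmetrization can alter clique numbers. I would first try to show that if $uv\notin E$ and $vw\notin E$ but $uw\in E$, then shifting weight toward $v$ strictly increases the objective. This contradicts the optimality of $y$.
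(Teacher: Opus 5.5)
Your argument is correct. Note that the paper gives no proof of Theorem~\ref{thm:r-two}: it imports both the inequality and its equality characterization from Liu--Ning, and then uses the equality case as a black box in the $r\ge 3$ equality analysis of Theorem~\ref{thm:edge-main}.

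Your route makes the result self-contained:
\begin{itemize}
\item The Nikiforov-style Cauchy--Schwarz splitting $\lambda^2\le\bigl(2\sum\theta_{uv}\bigr)\bigl(2\sum\theta_{uv}^{-1}x_u^2x_v^2\bigr)$, combined with the edge-local Motzkin--Straus bound, gives the inequality. Applied to a Perron vector of $A(W)$, the same two inequalities also give the weighted Theorem~\ref{thm:liu-ning-weighted}.
\item The two equality conditions are exactly what is needed to pin down the extremal graphs: $x_ux_v=\kappa\theta_{uv}$ on every edge, and $x^2$ is a full-support maximizer.
\item Your reduction to the connected case and your direct check of the converse for $K_{a,b}$ and the regular complete $t$-partite graphs are fine.
\end{itemize}
The paper's citation is more economical. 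Your version buys a transparent proof of the equality case and shows that Theorems~\ref{thm:liu-ning-weighted} and~\ref{thm:r-two} rest on the same two inequalities.

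The obstacle you anticipate at the end is not real. The perturbation acts on the vector $y$, not on the graph, so the coefficients $\beta_{uv}=c_G(uv)/(c_G(uv)-1)$ never change. Write $f(y)=\tfrac12 y^{T}By$, where $B$ is the $\beta$-weighted adjacency matrix. Take $uv,vw\notin E$ and $uw\in E$, and set $d=e_u+e_w-2e_v$. Then $f(y+td)=f(y)+t\,d^{T}By+t^{2}\beta_{uw}$. Moreover $d^{T}By=0$, because a maximizer with full support has all coordinates of $By$ equal. So every sufficiently small $t\neq 0$ strictly increases $f$, contradicting maximality.

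This second-order step, not the equal-marginal-gain remark alone, is what forces nonadjacency to be transitive. Hence $G$ is complete multipartite, $c_G(uv)=\omega$ on every edge, and the rest of your argument goes through.

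The same computation also removes the need to cite the edge-local Motzkin--Straus inequality. Take a maximizer of minimal support and use $d=e_u-e_v$ for nonadjacent $u,v$ in the support. Then $f$ is affine along $d$, so mass can be moved until one coordinate vanishes. This contradicts minimality, so the support is a clique $S$. On $S$ we have $\beta_{uv}\le |S|/(|S|-1)$, and the bound $\tfrac12$ follows at once.
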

\begin{proof}[\bf Proof of Theorem~\ref{thm:edge-main}]
We first assume that $G$ is connected. Set $z=A^{r-2}(G)\one$. Then
$z_v=w_{r-1}(v)$ for every $v\in V(G)$. Define a weighted graph
$W_{r-2}=(G,\mu_{r-2})$ by
$\mu_{r-2}(uv)=\sqrt{(z_u+z_v)/2}$ for every edge $uv\in E(G)$. By Lemma~\ref{lem:main} with $k=r-2$, we have
$ \lambda^{r/2}(G)\le\lambda(W_{r-2})$. Hence
$\lambda^r(G)\le \lambda^2(W_{r-2})$.  By Theorem~\ref{thm:liu-ning-weighted}, we have that 
\begin{flalign*}
  \lambda^r(G) \le \lambda^2(W_{r-2}) &\le
   2\sum_{uv\in E(G)}
   \frac{c_G(uv)-1}{c_G(uv)}\mu_{r-2}^2(uv)\\&=\sum_{uv\in E(G)}
   \frac{c_G(uv)-1}{c_G(uv)}(z_u+z_v)\\&=\sum_{uv\in E(G)}
   \frac{c_G(uv)-1}{c_G(uv)}(w_{r-1}(u)+w_{r-1}(v)).
\end{flalign*}
This proves Theorem~\ref{thm:edge-main} for connected graphs. Now assume that $G$ is not connected. Let $G_1$ be a connected component of $G$ such that
$\lambda(G)=\lambda(G_1)$. For edges in $G_1$, the clique numbers and walk counts agree with those in $G$. Hence
\begin{flalign*}
   \lambda^r(G)=\lambda^r(G_1)\le& \sum_{uv\in E(G_1)}
   \frac{c_G(uv)-1}{c_G(uv)}(w_{r-1}(u)+w_{r-1}(v))\\\le& \sum_{uv\in E(G)}
   \frac{c_G(uv)-1}{c_G(uv)}(w_{r-1}(u)+w_{r-1}(v)),
\end{flalign*}
completing the proof of \eqref{eq:edge-local}.

Now we characterize all the graphs attaining equality in \eqref{eq:edge-local}. The case $r=2$ is exactly the equality case of Theorem~\ref{thm:r-two}. Hence assume that $r\ge3$. By the preceding proof, equality in \eqref{eq:edge-local} implies that $G$ has exactly one nontrivial connected component, say $G'$. Since isolated vertices do not affect either side of \eqref{eq:edge-local}, we may work on $G'$.

Set $z=A^{r-2}(G')\one$ and define $W_{r-2}$ on $G'$ as in the connected case. Equality in the first step of the connected-case proof gives $\lambda(W_{r-2})=\lambda^{r/2}(G')$. By the equality statement in Lemma~\ref{lem:main} with $k=r-2$, we have
$A^{r-2}(G')\one=\lambda^{r-2}(G')\one$. Hence $z=\lambda^{r-2}(G')\one$. It follows that
\begin{flalign*}
\lambda^r(G')=
2\lambda^{r-2}(G')\sum_{uv\in E(G')}
   \frac{c_{G'}(uv)-1}{c_{G'}(uv)}.
\end{flalign*}
Since $G'$ has at least one edge, $\lambda(G')>0$. Therefore,
\begin{flalign*}
\lambda^2(G')=2\sum_{uv\in E(G')}
   \frac{c_{G'}(uv)-1}{c_{G'}(uv)}.
\end{flalign*}
By the equality case of Theorem~\ref{thm:r-two}, $G'$ is either a complete
bipartite graph or a regular complete $\omega(G')$-partite graph with
$\omega(G')\ge 3$. Applying Lemma~\ref{lem:main-equality-1} with $k=r-2$ shows that
$G'$ is regular when $r$ is odd. Hence the bipartite case is regular when $r$
is odd. Since $G'$ is the unique nontrivial component, $\omega(G')=\omega(G)$.

Conversely, suppose that $G$ is one of the listed graphs. Let $L_r(G)$ and
$V_r(G)$ denote the right-hand sides of \eqref{eq:edge-local} and
\eqref{eq:vertex-local}, respectively. By the inequality just proved,
\eqref{eq:vertex-local-main}, and $c_G(v)\le \omega(G)$ for every vertex $v$,
\begin{flalign}\label{eq:long-chain}
\lambda^r(G)\le L_r(G)\le V_r(G)\le
w_r(G)\frac{\omega(G)-1}{\omega(G)}.
\end{flalign}
Since isolated vertices do not affect either side of \eqref{eq:edge-local}, we
may assume that $G$ is connected. For the listed graphs,
Theorem~\ref{thm:Nikiforov-walk} gives equality in the last term of
\eqref{eq:long-chain}. Hence every inequality in \eqref{eq:long-chain} is an
equality, and in particular equality holds in \eqref{eq:edge-local}.
This completes the proof of Theorem~\ref{thm:edge-main}.
\end{proof}
We next give a short proof of Corollary~\ref{cor:vertex-main}. For $r=1$, the
inequality is Theorem~\ref{thm:r-one}. For $r\ge2$, it follows from
Theorem~\ref{thm:edge-main} and \eqref{eq:vertex-local-main}. Thus it remains
to determine the graphs attaining equality in \eqref{eq:vertex-local}.
\begin{proof}[\bf Proof of Corollary~\ref{cor:vertex-main}]
The inequality follows from Theorem~\ref{thm:r-one} when $r=1$, and from
Theorem~\ref{thm:edge-main} together with \eqref{eq:vertex-local-main} when
$r\ge2$. We now determine the graphs attaining equality in \eqref{eq:vertex-local}. If $G$ is edgeless, then equality
is clear.

If $r=1$, the remaining equality cases are exactly those of the localized Wilf
theorem, Theorem~\ref{thm:r-one}. Hence assume
that $r\ge 2$ and that $G$ has at least one edge. Let $L_r(G)$ and $V_r(G)$
denote the right-hand sides of \eqref{eq:edge-local} and \eqref{eq:vertex-local},
respectively. By Theorem~\ref{thm:edge-main} and \eqref{eq:vertex-local-main},
$\lambda^r(G)\le L_r(G)\le V_r(G)$. Thus equality in \eqref{eq:vertex-local}
implies equality in \eqref{eq:edge-local}. Hence Theorem~\ref{thm:edge-main}
gives precisely the $r\ge2$ non-edgeless graphs listed in the statement.

Conversely, suppose first that $r=1$ and that $G$ satisfies the corresponding
non-edgeless condition in the statement. Then equality follows from
Theorem~\ref{thm:r-one}. Finally, suppose that $r\ge2$ and that $G$ satisfies
the corresponding structural condition in the statement. By \eqref{eq:long-chain}
and the equality case of Theorem~\ref{thm:Nikiforov-walk} for these graphs, we
have $\lambda^r(G)=V_r(G)$. Thus equality holds in \eqref{eq:vertex-local}. This
completes the proof of Corollary~\ref{cor:vertex-main}.
\end{proof}

\section{A comparison of the three bounds}\label{sec:concluding-remarks}

For \(r\ge 2\), define
\[
\begin{cases}
\displaystyle
E_r(G)=
\sum_{uv\in E(G)}
   \frac{c_G(uv)-1}{c_G(uv)}
   \bigl(w_{r-1}(u)+w_{r-1}(v)\bigr),\\[2ex]
\displaystyle
V_r(G)=
\sum_{v\in V(G)}
   w_r(v)\frac{c_G(v)-1}{c_G(v)},\\[2ex]
\displaystyle
U_r(G)=
   w_r(G)\frac{\omega(G)-1}{\omega(G)}.
\end{cases}
\]
Then Theorem~\ref{thm:edge-main}, \eqref{eq:vertex-local-main}, and
$c_G(v)\le \omega(G)$ give
\[
\lambda^r(G)\le E_r(G)\le V_r(G)\le U_r(G).
\]
The next two examples show that both comparisons can be strict.

\begin{example}[Edge-local versus vertex-local]\label{ex:edge-vs-vertex}
Let $s\ge 3$, and let $H$ be a triangle-free graph with a distinguished vertex
$x$ such that $d_H(x)>0$. Let $G$ be obtained by identifying $x$ with one
vertex of $K_s$, and by adding no other edges between $K_s\setminus\{x\}$ and
$H\setminus\{x\}$. Then $c_G(x)=s$, while $c_G(xu)=2$ for every
$u\in N_H(x)$. For all other edges, the edge coefficient and the corresponding
vertex coefficients coincide. Hence, for every $r\ge 2$,
\[
\begin{aligned}
V_r(G)-E_r(G)
&=\left(\frac{s-1}{s}-\frac12\right)
  \sum_{u\in N_H(x)}w_{r-1}(u)  \\
&=\frac{s-2}{2s}\sum_{u\in N_H(x)}w_{r-1}(u)>0.
\end{aligned}
\]
Thus the edge-local bound is strictly sharper than the vertex-local bound on
this class of graphs. The reason is that the vertex $x$ lies in an $s$-clique,
but each edge $xu$ with $u\in N_H(x)$ lies only in a $2$-clique. See
Figure~\ref{fig:edge-vs-vertex} for an illustration.
\end{example}

\begin{figure}[H]
\centering
\begin{tikzpicture}[scale=0.95]
  \node[gluevtx] (x) at (0,0) {};
  \node[vtx] (a) at (-1.05,0.95) {};
  \node[vtx] (b) at (-1.05,-0.95) {};
  \node[vtx] (c) at (-2.05,0) {};
  \draw[ed] (x)--(a) (x)--(b) (x)--(c) (a)--(b) (a)--(c) (b)--(c);

  \node[vtx] (u1) at (1.05,0.95) {};
  \node[vtx] (u2) at (1.05,-0.95) {};
  \node[vtx] (u3) at (2.15,0.95) {};
  \node[vtx] (u4) at (2.15,-0.95) {};
  \node[vtx] (u5) at (3.15,0) {};
  \draw[ed] (x)--(u1) (x)--(u2) (u1)--(u3) (u3)--(u5) (u5)--(u4) (u4)--(u2);
\end{tikzpicture}
\caption{A graph of the type in Example~\ref{ex:edge-vs-vertex}. The glued vertex is shown in red.}
\label{fig:edge-vs-vertex}
\end{figure}

\begin{example}[Vertex-local versus global]\label{ex:vertex-vs-global}
Let $G$ be the disjoint union of $K_s$ and a triangle-free graph $H$, where
$s\ge 3$ and $H$ has at least one edge. Then $\omega(G)=s$. For every
$r\ge 2$,
\[
   V_r(G)=
   \left(1-\frac1s\right)w_r(K_s)+\frac12 w_r(H),
\]
where isolated vertices of $H$, if any, do not contribute. On the other hand,
\[
   U_r(G)=
   \left(1-\frac1s\right)\bigl(w_r(K_s)+w_r(H)\bigr).
\]
Hence
\[
   U_r(G)-V_r(G)
   =\left(\frac12-\frac1s\right)w_r(H)>0.
\]
Thus the vertex-local bound is strictly sharper than the global bound on this
class of graphs. Here the clique $K_s$ determines the global parameter
$\omega(G)$, while walks inside the triangle-free component are charged with
the local coefficient $1/2$. See Figure~\ref{fig:vertex-vs-global} for an
illustration.
\end{example}

\begin{figure}[H]
\centering
\begin{tikzpicture}[scale=0.95]
  \node[vtx] (a1) at (-2.35,0.95) {};
  \node[vtx] (a2) at (-2.35,-0.95) {};
  \node[vtx] (a3) at (-3.35,0) {};
  \node[vtx] (a4) at (-1.35,0) {};
  \draw[ed] (a1)--(a2) (a1)--(a3) (a1)--(a4) (a2)--(a3) (a2)--(a4) (a3)--(a4);

  \node[vtx] (b1) at (0.75,0) {};
  \node[vtx] (b2) at (1.35,0.95) {};
  \node[vtx] (b3) at (2.45,0.95) {};
  \node[vtx] (b4) at (3.05,0) {};
  \node[vtx] (b5) at (2.45,-0.95) {};
  \node[vtx] (b6) at (1.35,-0.95) {};
  \draw[ed] (b1)--(b2) (b2)--(b3) (b3)--(b4) (b4)--(b5) (b5)--(b6) (b6)--(b1);
\end{tikzpicture}
\caption{A graph of the type in Example~\ref{ex:vertex-vs-global}.}
\label{fig:vertex-vs-global}
\end{figure}

\section*{Acknowledgements}

This work was supported by the National Key R\&D Program of China
(No.~2022YFA1006400) and the National Natural Science Foundation of China
(No.~12571376).

\section*{Declaration}
\noindent$\textbf{Conflict~of~interest}$
The authors declare that they have no known competing financial interests or personal relationships that could have appeared to influence the work reported in this paper.
	
\noindent$\textbf{Data~availability}$
Data sharing not applicable to this paper as no datasets were generated or analysed during the current study.

\end{document}